\DeclarePairedDelimiterX{\norm}[1]{\lVert}{\rVert}{#1}
\newtheorem{thm}{Theorem}[section]
\newtheorem{thmA}{Theorem}
\newtheorem{corA}[thmA]{Corollary}
\newtheorem{prop}[thm]{Proposition}
\theoremstyle{definition}
\theoremstyle{remark}
\newtheorem{remark}[thm]{Remark}
\DeclareMathOperator{\Vol}{Vol}
\DeclareMathOperator{\Div}{div}
\DeclareMathOperator{\tr}{tr}
\newcommand{\dV}{\, \mathrm{d}V}
\newcommand{\ddt}{\frac{\mathrm{d}}{\mathrm{d}t}}
\begin{document}

\makeatletter
\@namedef{subjclassname@2020}{%
\textup{2020} Mathematics Subject Classification}
\makeatother

\begin{abstract}
We show there are no extremal metrics for the eigenvalues of the Neumann Laplacian on any compact manifold. Nonetheless, we construct examples of conformally extremal metrics for the eigenvalues of this operator in any annulus and characterise these special metrics in the general case of a compact manifold of dimension $n \geq 2$. As for the Dirichlet Laplacian, we prove non existence of extremal metrics on any compact surface.

\end{abstract}

\title[Extremal metrics for the eigenvalues of the Laplacian]{Extremal metrics for the eigenvalues of the Laplacian on manifolds with boundary}
\author{Eduardo Longa}

\address{Departamento de Matem\'{a}tica, Instituto de Matem\'{a}tica e Estat\'{i}stica, Universidade de S\~{a}o Paulo, R. do Mat\~{a}o 1010, S\~{a}o Paulo, SP 05508-900, Brazil}
\email{eduardo.r.longa@gmail.com}

\subjclass[2020]{58C40, 53C42}

\keywords{Spectral geometry, eigenvalue of the Laplacian, critical metric, minimal immersion}

\maketitle

\let\thefootnote\relax\footnote{The author was partially supported by grants 2021/03599-7 and 2021/09650-4, São Paulo Research Foundation (FAPESP).}

\section{Introduction}

Let $(M^n,g)$ be a closed Riemannian manifold of dimension $n \geq 2$. The Laplace-Beltrami operator $\Delta_g$, or just Laplacian, acts on smooth functions as $\Delta_g f = -\Div \nabla f$. In the case of compact manifolds, its spectrum is discrete and consists entirely of eigenvalues, which we order and list counted with multiplicity:
\begin{align*}
0=\lambda_0(M,g) < \lambda_1(M,g) \leq \lambda_2(M,g) \leq \cdots \nearrow \infty.
\end{align*}

When considered as a functional over the space of Riemannian metrics on $M$, $\lambda_k$ is not scale invariant (in fact, $\lambda_k(M,cg) = \frac{1}{c} \lambda_k(M,g)$ for any constant $c>0$). Consider then the normalised eigenvalues
\begin{align*}
\overline{\lambda}_k(M,g) = \lambda_k(M,g) \Vol(M,g)^{2/n}
\end{align*}

One of the main problems in Spectral Geometry is to compute the exact values of 
\begin{align*}
\Lambda_k(M) = \sup_g \overline{\lambda}_k(M,g),
\end{align*}
and the corresponding maximising metrics. Here the supremum is taken over all smooth Riemannian metrics $g$ on $M$. When $M$ has dimension $n \geq 3$, then $\Lambda_1(M) = \infty$ (see \cite{colbois}; nonetheless, the supremum above is always finite when considered with respect to a conformal class of metrics, as shown in \cite{korevaar}). In the case of surfaces, many authors have contributed to this program by either calculating $\Lambda_k(M)$ and the maximising metrics for some surfaces or giving upper bounds for this quantity. We recommend the excellent survey \cite{karpukhin2021} for the interested reader to check the development of the problem and the main open questions as of the publishing of this article.

A deep and surprising feature of maximising metrics for $\Lambda_1$ in a surface was discovered by Nadirashvili \cite{nadirashvili}: they are arise as induced by minimal immersions of the surface into a round sphere by first eigenfunctions. Later, El Soufi and Ilias (\cite{elsoufi2000}, \cite{elsoufi2008}) generalized this result for closed manifolds of arbitrary dimension using the related concept of extremal metric (see Section \ref{derivative of eigenvalues} for details). Finally, Fraser and Schoen \cite{fraser} showed that maximising metrics for the first normalised Steklov eigenvalue on a compact bordered surface are induced by free boundary minimal immersions of the surface into some Euclidean ball. This result and related techniques have been used to construct many examples of free boundary minimal surfaces in the $3$-ball $\mathbb{B}^3$. Very recently, Karpukhin, Kusner, McGrath and Stern \cite{karpukhin2024} employed eigenvalue optimisation techniques to solve the topological realisation problem for free boundary minimal surfaces in $\mathbb{B}^3$.

\begin{remark}
It is not obvious whether for a given surface $M$ the supremum $\Lambda_1(M)$ is attained by a smooth Riemannian metric. This question was recently addressed by Matthiesen and Siffert \cite{siffert}, where they claim that maximising metrics exist in any closed surface and are smooth away of at most finitely many conical singularities. However, these authors have informed those of \cite{karpukhin2024} that the argument in \cite{siffert} ``contains a nontrivial gap''. 
\end{remark}

In this paper we will address analogous problems in the case of a compact Riemannian manifold with boundary $(M^n,g)$. From now on, let us denote by
\begin{align*}
0 < \lambda_1(M,g) < \lambda_2(M,g) \leq \cdots \leq \lambda_k(M,g) \leq \cdots \nearrow \infty.
\end{align*}
and by
\begin{align*}
0 = \mu_0(M,g) < \mu_1(M,g) \leq \cdots \leq \mu_k(M,g) \leq \cdots \nearrow \infty.
\end{align*}
the spectra of the Laplacian on $M$ with Dirichlet and Neumann boundary conditions, respectively. Let us also write
\begin{align*}
\lambda_k^{\#}(M) = \inf_g \lambda_k(M,g) \Vol(M,g)^{2/n}
\end{align*}
and
\begin{align*}
\mu_k^{\#}(M) = \sup_g \mu_k(M,g) \Vol(M,g)^{2/n}
\end{align*}
where, again, the infimum and the supremum are taken with respect to all smooth Riemannian metrics $g$ on $M$.

Li and Yau proved that, in the case of compact surfaces, $\mu_1^{\#}(M)$ is always finite (see Fact 1, Fact 5 and Theorem 1 in \cite{li}). Our first theorem, shows, in particular, that the supremum defining this quantity is never achieved.

\begin{thmA} \label{main neumann}
There are no extremal metrics for $\mu_k$ on any compact Riemannian manifold with boundary of dimension $n \geq 2$, regardless of the value of $k \geq 1$. In particular, $\mu^{\#}_k(M)$ is never attained by a smooth Riemannian metric on $M$.
\end{thmA}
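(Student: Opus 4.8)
My plan is to contradict the characterisation of extremal metrics obtained in Section~\ref{derivative of eigenvalues}. The first thing to pin down is that for the \emph{Neumann} problem the first variation of $\mu_k$ along a deformation $g_t=g+th+o(t)$ has no boundary contribution: integrating by parts against a Neumann eigenfunction $u$ produces only a boundary term proportional to $\partial_\nu u$, which vanishes. Hence that section's machinery applies verbatim and tells us: were $g$ extremal for $\mu_k$, there would exist Neumann eigenfunctions $u_1,\dots,u_m$ for $\mu_k(M,g)$, not all identically zero, and a constant $c$ with
\[
\sum_{i=1}^m\Big(du_i\otimes du_i-\tfrac12\big(|\nabla u_i|^2-\mu_k u_i^2\big)g\Big)=c\,g\qquad\text{on }M.
\]
If Section~\ref{derivative of eigenvalues} already states the condition in the reduced form $\sum_i du_i\otimes du_i=c\,g$, the argument below shortens to its first two steps.

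The second ingredient is to use the Neumann condition again, this time \emph{on} $\partial M$. Put $\Phi:=\sum_i du_i\otimes du_i$; the identity above says $\Phi=\phi\,g$ with $\phi:=c+\tfrac12\big(\sum_i|\nabla u_i|^2-\mu_k\sum_i u_i^2\big)$, so $\phi\ge 0$ and, taking traces, $\sum_i|\nabla u_i|^2=n\phi$. At a boundary point $p$ with outward unit normal $\nu$ one has $du_i(\nu)=\partial_\nu u_i=0$, hence $\Phi(\nu,\nu)(p)=\sum_i(\partial_\nu u_i(p))^2=0$, and therefore $\phi(p)=0$. Thus $\phi\equiv 0$ on $\partial M$. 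If the extremal equation were $\Phi=c\,g$ this already forces $c=0$, hence $\Phi\equiv 0$, so every $u_i$ is constant and therefore zero (as $\mu_k>0$), a contradiction.

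In the general case I would close the argument with one more computation. Writing $e:=\sum_i|\nabla u_i|^2=n\phi$ and $f:=\sum_i u_i^2$, the definition of $\phi$ rearranges to $\mu_k f=2c+(n-2)\phi$, and $\Delta f=2\mu_k f-2e=4c-4\phi$ since $\Delta u_i=\mu_k u_i$. If $n=2$ the first relation forces $f\equiv 2c/\mu_k$, so $\Delta f=0$ and $\phi\equiv c$. If $n\ge 3$, the first relation gives $\Delta\phi=\tfrac{\mu_k}{n-2}\Delta f=\tfrac{4\mu_k}{n-2}(c-\phi)$, while $\partial_\nu f=2\sum_i u_i\partial_\nu u_i=0$ on $\partial M$ forces $\partial_\nu\phi=0$ there; so $\psi:=\phi-c$ satisfies $\Delta\psi=-\tfrac{4\mu_k}{n-2}\psi$ with vanishing Neumann data, and $\int_M|\nabla\psi|^2=\int_M\psi\Delta\psi=-\tfrac{4\mu_k}{n-2}\int_M\psi^2$ forces $\psi\equiv0$, i.e. $\phi\equiv c$ again. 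Either way $\phi\equiv c$; combining with $\phi|_{\partial M}=0$ gives $c=0$, so $\Phi\equiv 0$, every $u_i$ is constant, hence every $u_i$ vanishes (as $\mu_k>0$), contradicting that the $u_i$ are not all identically zero. This rules out extremal metrics for $\mu_k$, and in particular shows $\mu_k^{\#}(M)$ is not attained by a smooth metric.

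The step I expect to be the real crux is the very first one: verifying that no boundary term survives in the first variation of $\mu_k$, so that the extremal equation holds with no extra boundary component. This is precisely where ``Neumann'' is used rather than ``Dirichlet'', and the same vanishing of $\partial_\nu u_i$ is what makes $\Phi(\nu,\nu)$ vanish on $\partial M$; the interplay of these two uses of the boundary condition is the heart of the theorem. The only other care needed is minor: the trace relation degenerates when $n=2$, but there the conclusion $\phi\equiv c$ drops out even more directly, and the rest is routine manipulation of the stress--energy tensor plus one integration by parts.
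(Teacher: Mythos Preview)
Your proof is correct and follows essentially the same route as the paper: obtain the stress--energy identity $\sum_i q(u_i)=cg$ from the extremality machinery of Section~\ref{derivative of eigenvalues}, use the Neumann eigenvalue equation together with positivity of the Neumann Laplacian to force $\phi$ (equivalently $|u|^2$) to be constant, and then exploit $\sum_i(\partial_\nu u_i)^2=0$ on $\partial M$ for the contradiction. The only cosmetic difference is in the endgame: the paper fixes $c=1$, interprets $\sum_i du_i\otimes du_i=g$ as an isometric immersion into a round sphere, and observes that an immersion cannot have $du(\nu)=0$; you instead deduce $c=0$ and hence $u_i\equiv 0$ --- both are the same observation that $\Phi(\nu,\nu)=0$ on $\partial M$ is incompatible with $\Phi=\phi g$ for $\phi>0$.
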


In another paper, Li and Yau also proved that if $(M^n,g)$ is compact with boundary and has nonnegative Ricci curvature, then $\mu_k(M,g) \Vol(M,g)^{2/n}$ is bounded above by $C k^{2/n}$, where $C > 0$ only depends on $n$. Thus, Theorem \ref{main neumann} implies that there are no maximising metrics for the quantity $\mu_k(M,g) \Vol(M,g)^{2/n}$ when $g$ varies on the set of positive Ricci smooth metrics on $M$.

Our second result, on the other hand, charachterises conformally extremal metrics for the Neumann eigenvalues. Together with the examples in Section \ref{Examples}, this shows that the situation is entirely different when we restrict ourselves to fixed conformal classes. 

\begin{thmA} \label{main neumann conformal}
Let $M^n$ be a compact smooth manifold with boundary, $n \geq 2$. If a Riemannian metric $g$ is conformally extremal for $\mu_k$, $k \geq 1$, then there exist eigenfunctions $u_1, \dots, u_m$ associated with $\mu_k(M,g)$ such that 
\begin{itemize}
\item[(i)] $\sum_{i=1}^m u_i^2 = \frac{1}{\mu_k}$;
\item[(ii)] $\sum_{i = 1}^m \vert \nabla u_i \vert^2 = 1$.
\end{itemize}
Conversely, if there exist eigenfunctions $u_1, \dots, u_m$ associated with $\mu_k(M,g)$ satisfying (i) and (ii), and either $\mu_k(M,g) < \mu_{k+1}(M,g)$ or $\mu_k(M,g) > \mu_{k-1}(M,g)$, then the metric $g$ is conformally extremal for $\mu_k$. 
\end{thmA}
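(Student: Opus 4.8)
The plan is to run the variational scheme for eigenvalue functionals (recalled in Section~\ref{derivative of eigenvalues}) for conformal deformations of the Neumann problem, reduce conformal extremality of $g$ to a single identity on the eigenspace $E$ of $\mu_k=\mu_k(M,g)$, and then read off (i)--(ii) from that identity using the Bochner-type relation for sums of squares of eigenfunctions together with the non-negativity of the Neumann spectrum. For $\psi\in C^\infty(M)$ with $\int_M\psi\dV=0$ I would take the deformation $g_t=(1+t\psi)g$; since $\Vol(M,g_t)$ is stationary to first order along such a path, it suffices to control $t\mapsto\mu_k(M,g_t)$, and the Hadamard-type first-variation formula for the Neumann eigenvalue --- which carries no boundary contribution, precisely because the eigenfunctions satisfy the natural condition $\partial_\nu u=0$ --- gives, for an $L^2$-normalised eigenfunction $u$,
\begin{equation*}
\ddt\Big|_{t=0}\mu_k(M,g_t)=\frac12\int_M\psi\big[(n-2)\lvert\nabla u\rvert^2-n\mu_k u^2\big]\dV=:Q_\psi(u).
\end{equation*}
For $\mu_k$ of multiplicity $m$ the one-sided derivatives of $t\mapsto\mu_k(M,g_t)$ are eigenvalues of the quadratic form $Q_\psi$ restricted to $E$ (with respect to the $L^2$ inner product), so conformal extremality of $g$ implies that $Q_\psi\rvert_E$ is indefinite --- has both a non-negative and a non-positive eigenvalue --- for every such $\psi$; and when $\mu_k$ lies at an end of its eigenvalue cluster, that is, $\mu_k<\mu_{k+1}$ or $\mu_k>\mu_{k-1}$, this indefiniteness condition is \emph{equivalent} to extremality. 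This is where the hypothesis in the converse enters.

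By the convex-hull (Carath\'eodory) lemma underlying the El~Soufi--Ilias criterion, the condition ``$Q_\psi\rvert_E$ is indefinite for every $\psi$ with $\int_M\psi\dV=0$'' is equivalent to the existence of eigenfunctions $u_1,\dots,u_m\in E$, not all identically zero, with $\sum_{i=1}^mQ_\psi(u_i)=0$ for all such $\psi$. Writing $P=\sum_iu_i^2$ and $N=\sum_i\lvert\nabla u_i\rvert^2$, this last condition reads $\int_M\psi\,[(n-2)N-n\mu_k P]\dV=0$ for every mean-zero $\psi$, which means exactly that $(n-2)N-n\mu_k P$ is a constant. This is the single identity the whole argument funnels through.

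Now the extraction. From $\Delta u_i=\mu_k u_i$ one has the pointwise identity $\Delta P=2\mu_k P-2N$, so $N=\mu_k P-\tfrac12\Delta P$. Substituting into ``$(n-2)N-n\mu_k P=\mathrm{const}$'' and rearranging gives, for $n\geq3$, $\Delta(P-a)=-\tfrac{4\mu_k}{n-2}(P-a)$ for a suitable constant $a$, while for $n=2$ it gives $P=\mathrm{const}$ directly. Since each $u_i$ satisfies the Neumann condition, so do $P$ and $P-a$; pairing $\Delta(P-a)=-\tfrac{4\mu_k}{n-2}(P-a)$ with $P-a$ and integrating by parts yields $\int_M\lvert\nabla(P-a)\rvert^2\dV=-\tfrac{4\mu_k}{n-2}\int_M(P-a)^2\dV\leq0$ (here $\mu_k>0$ and $n\geq3$), forcing $P\equiv a$. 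Hence $P$ is a positive constant and, back in $\Delta P=2\mu_k P-2N$, also $N=\mu_k P$ is constant. Replacing $u_1,\dots,u_m$ by $\lambda u_1,\dots,\lambda u_m$ with $\lambda^2=1/(\mu_k P)$ makes $\sum_iu_i^2\equiv1/\mu_k$, and this in turn forces $\sum_i\lvert\nabla u_i\rvert^2\equiv1$, which is (i)--(ii).

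For the converse, assume $u_1,\dots,u_m\in E$ satisfy (i)--(ii). Then $(n-2)N-n\mu_k P\equiv(n-2)-n=-2$ is constant, so $\sum_iQ_\psi(u_i)=-\int_M\psi\dV=0$ for every mean-zero $\psi$; since $\sum_i\int_Mu_i^2\dV=\Vol(M,g)/\mu_k\neq0$, the form $Q_\psi\rvert_E$ can be neither positive nor negative definite, hence is indefinite, for every such $\psi$. Under the hypothesis $\mu_k<\mu_{k+1}$ or $\mu_k>\mu_{k-1}$ this is equivalent to $g$ being conformally extremal, and we are done. The part I expect to be delicate is not the computation above, which is short, but the set-up of the variational scheme: justifying the first-variation formula for the a priori non-smooth functional $\mu_k$, identifying its one-sided derivatives (Clarke subdifferential) with eigenvalues of $Q_\psi\rvert_E$ --- where the endpoint hypothesis on $\mu_k$ and the vanishing of the boundary term in the Neumann case are both essential --- and the convex-hull lemma that converts ``indefinite for every $\psi$'' into the single identity $\sum_iQ_\psi(u_i)\equiv0$. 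All of this is developed in Section~\ref{derivative of eigenvalues}, after which the rest is as above.
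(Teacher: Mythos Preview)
Your proposal is correct and follows essentially the same route as the paper's proof: the convex-hull/Hahn--Banach argument (Propositions~\ref{derivative lambda conformal} and~\ref{isotropic conformal}) to obtain the identity $(n-2)N-n\mu_kP=\text{const}$, the Bochner-type relation $\Delta P=2\mu_kP-2N$, and the positivity of the Neumann Laplacian to force $P$ constant, with the converse via $\sum_i Q_\psi(u_i)=0$. The only cosmetic difference is that the paper shows directly that $1\in K$ (so the constant comes out as $1$), whereas you obtain an unspecified constant and rescale at the end; both are fine.
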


There is a beautiful connection between extremal metrics for the Laplacian in conformal classes and the theory of harmonic maps into spheres, firstly explored in \cite{elsoufi2002, fraser} and further developped in \cite{matthiesen, karpukhin2021, karpukhin2022, karpukhin2022_2}. Let us briefly recall some concepts.

Let $(M^n,g)$ and $(N,h)$ be smooth Riemannian manifolds. A map $u \in W^{1,p}(M,N)$ is called $p$-harmonic if it is a critical point of the $p$-energy functional
\begin{align*}
E^p(u) = \int_M \vert \mathrm{d} u \vert^p \dV_g,
\end{align*} 
where $\vert \mathrm{d} u \vert^2$ is computed with respect to the metrics $g$ and $h$. When $p=2$, we simply say that $u$ is harmonic. 

We are only concerned with the case $p = n$, when $(M,g)$ is compact and when $(N,h)$ is the unit round sphere $\mathbb{S}^{m-1}$. With these hypotheses, it is not difficult to show that $u : (M^n, g) \to \mathbb{S}^{m-1}$ is $n$-harmonic if and only if it is a weak solution of
\begin{align*}
\Div(\vert \nabla u \vert^{n-2} \nabla u) = \vert \nabla u \vert^n \nabla u.
\end{align*} 
Moreover, such maps are $C^{1,\alpha}$ for some $\alpha > 0$ when $n > 2$ and they are smooth when either $n = 2$ or $u$ is nondegenerate, i.e., $\mathrm{d} u \neq 0$ (see \cite{takeuchi} or \cite{matthiesen}).

With this language, Theorem \ref{main neumann conformal} can be rephrased in the following way.

\begin{corA} \label{n-harmonic}
Let $M^n$ be a compact smooth manifold with boundary, $n \geq 2$. If a Riemannian metric $g$ is conformally extremal for $\mu_k$, $k \geq 1$, then there exist an $n$-harmonic map $u : (M,g) \to \mathbb{S}^{m-1}$ into the unit round sphere with $\vert \mathrm{d} u \vert^2 = \mu_k(g)$, all of whose components satify the Neumann boundary condition.

Conversely, if there is a nondegenerate harmonic map $u : (M,g) \to \mathbb{S}^{m-1}$ whose components satisfy the Neumann boundary condition, then the metric $\tilde{g} = \vert \mathrm{d} u \vert_g^2 \, g$ is conformally extremal for $\mu_k$, where $k$ is the smallest positive integer index associated to the eigenvalue $1$. 
\end{corA}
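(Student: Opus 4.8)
The plan is to obtain Corollary~\ref{n-harmonic} as a translation of Theorem~\ref{main neumann conformal} through the usual dictionary: an ordered $m$-tuple $(u_1,\dots,u_m)$ of functions on $M$ with $\sum_i u_i^2$ constant corresponds, after rescaling onto the unit sphere, to a map $u=(u^1,\dots,u^m)\colon M\to\mathbb{S}^{m-1}$, and then $\sum_i|\nabla u_i|^2=|\mathrm{d}u|^2$. Two elementary observations do the work. \textbf{(a)} For a sphere-valued map of \emph{constant} energy density $|\mathrm{d}u|^2\equiv c>0$, the term $\langle\nabla(|\nabla u|^{n-2}),\nabla u\rangle$ that appears when one expands the divergence in the $n$-harmonic system vanishes, and the system collapses to $\Delta_g u^j=c\,u^j$ for every $j$; thus such a map is $n$-harmonic precisely when its components are Laplace eigenfunctions with eigenvalue $c=|\mathrm{d}u|^2$ (consistently with $\Delta=-\Div\nabla$, as one checks on the identity map $\mathbb{S}^{m-1}\to\mathbb{S}^{m-1}$). \textbf{(b)} In dimension $n$ the $n$-energy is conformally invariant: for $\tilde g=\rho\,g$ one has $|\mathrm{d}u|_{\tilde g}^2=\rho^{-1}|\mathrm{d}u|_g^2$ and $\dV_{\tilde g}=\rho^{n/2}\dV_g$, whence $|\mathrm{d}u|_{\tilde g}^n\,\dV_{\tilde g}=|\mathrm{d}u|_g^n\,\dV_g$, so the class of $n$-harmonic maps is the same for $g$ and for any conformal metric; moreover the unit conormal along $\partial M$ rescales as $\tilde\nu=\rho^{-1/2}\nu$, so the Neumann condition $\partial_\nu u^j=0$ is preserved under conformal change.

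\textbf{From extremality to an $n$-harmonic map.} Suppose $g$ is conformally extremal for $\mu_k$. Theorem~\ref{main neumann conformal} provides Neumann eigenfunctions $u_1,\dots,u_m$ for $\mu_k=\mu_k(M,g)$ with $\sum_i u_i^2=1/\mu_k$ and $\sum_i|\nabla u_i|^2=1$. Set $u=\sqrt{\mu_k}\,(u_1,\dots,u_m)$. Condition (i) gives $|u|^2\equiv1$, so $u\colon M\to\mathbb{S}^{m-1}$, and condition (ii) gives $|\mathrm{d}u|^2\equiv\mu_k$. Since $\Delta_g u_i=\mu_k u_i$, the components satisfy $\Delta_g u^j=\mu_k u^j=|\mathrm{d}u|^2u^j$, so by \textbf{(a)} the map $u$ is $n$-harmonic with $|\mathrm{d}u|^2=\mu_k(g)$; and each $u_i$, being a Neumann eigenfunction, satisfies $\partial_\nu u_i=0$, hence so do the components of $u$.

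\textbf{From an $n$-harmonic map to an extremal metric.} Conversely, let $u\colon(M,g)\to\mathbb{S}^{m-1}$ be a nondegenerate $n$-harmonic map whose components satisfy the Neumann condition. By the regularity of nondegenerate $n$-harmonic maps recalled above, $u$ is smooth, so $\rho:=|\mathrm{d}u|_g^2$ is a smooth positive function and $\tilde g:=\rho\,g$ is a genuine smooth metric conformal to $g$. By \textbf{(b)}, $u$ is still $n$-harmonic for $\tilde g$, and now $|\mathrm{d}u|_{\tilde g}^2=\rho^{-1}\rho\equiv1$; so by \textbf{(a)}, $\Delta_{\tilde g}u^j=u^j$ for every $j$, while the Neumann condition persists for $\tilde g$. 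Hence $1$ is a Neumann eigenvalue of $(M,\tilde g)$ --- at least one $u^j$ is not identically zero, as $\sum_j(u^j)^2\equiv1$ --- and, taking $k\geq1$ to be the smallest index with $\mu_k(M,\tilde g)=1$, we have $\mu_{k-1}(M,\tilde g)<1=\mu_k(M,\tilde g)$. The $u^j$ (discarding any that vanish identically) are eigenfunctions of $\mu_k(M,\tilde g)$ with $\sum_j(u^j)^2\equiv1=1/\mu_k$ and $\sum_j|\nabla u^j|_{\tilde g}^2=|\mathrm{d}u|_{\tilde g}^2\equiv1$, i.e.\ (i) and (ii) of Theorem~\ref{main neumann conformal} hold; since moreover $\mu_k(M,\tilde g)>\mu_{k-1}(M,\tilde g)$, the converse half of that theorem shows $\tilde g$ is conformally extremal for $\mu_k$.

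\textbf{Where the care is needed.} Essentially all the analytic substance is already contained in Theorem~\ref{main neumann conformal}; the remainder is bookkeeping, and the one genuinely delicate point is the role of the exponent $p=n$. For $n\geq3$ the Laplacian is not conformally covariant: for $\tilde g=\rho\,g$,
\begin{align*}
\Delta_{\tilde g}f=\rho^{-1}\Delta_g f-\tfrac{n-2}{2}\,\rho^{-2}\langle\nabla\rho,\nabla f\rangle_g,
\end{align*}
and the first-order term here is exactly cancelled by the first-order term in the expanded $n$-harmonic system; this cancellation is what makes the equivalence in \textbf{(a)}--\textbf{(b)} work in all dimensions, and it reduces, when $n=2$, to the classical correspondence between harmonic maps into spheres and conformal metrics built from first eigenfunctions. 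The only other thing to keep track of is the transformation of the conormal derivative used to transfer the Neumann condition between $g$ and $\tilde g$.
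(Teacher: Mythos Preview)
Your argument is correct and is precisely the translation the paper has in mind: the paper gives no separate proof of Corollary~\ref{n-harmonic}, presenting it simply as a rephrasing of Theorem~\ref{main neumann conformal}, and you have supplied the dictionary in full detail. The two key facts you isolate---that for a sphere-valued map of constant energy density the $n$-harmonic system reduces to $\Delta u^j=\lvert\mathrm{d}u\rvert^2 u^j$, and that the $n$-energy is conformally invariant---are exactly what makes the correspondence work.

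One point is worth flagging. In the converse you assume $u$ is a nondegenerate \emph{$n$-harmonic} map, whereas the paper's statement says ``nondegenerate harmonic map''. Your version is the one that actually goes through for arbitrary $n\geq 2$: harmonicity (the case $p=2$) is not conformally invariant when $n\geq 3$, and if one starts from a merely harmonic $u$ the computation
\[
\Delta_{\tilde g}u^j=u^j-\tfrac{n-2}{2\rho^2}\langle\nabla\rho,\nabla u^j\rangle_g,\qquad \rho=\lvert\mathrm{d}u\rvert_g^2,
\]
shows that the components need not be eigenfunctions for $\tilde g$ unless the cross term vanishes. Since the paper only applies the corollary to surfaces (the unduloid examples in Section~\ref{Examples}), where harmonic and $n$-harmonic coincide, the discrepancy is harmless there; but your formulation with $n$-harmonic is the correct general one, in line with the Karpukhin--M\'etras reference the paper cites.
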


As an application of Corollary \ref{n-harmonic}, in Section \ref{Examples} we construct examples of conformally extremal metrics in an annulus by considering appropriately chosen domains in unduloids. 

As for the eigenvalues of the Laplacian with Dirichlet boundary condition, we show nonexistence of extremal metrics both globally and when restricted to conformal classes. 

\begin{thmA} \label{main dirichlet}
There are no extremal metrics for $\lambda_k$ on any compact Riemannian surface with boundary, irrespective of the value of $k \geq 1$. In particular, $\lambda^{\#}_k(M)$ is never attained by a smooth Riemannian metric on $M^2$.
\end{thmA}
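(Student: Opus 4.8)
The plan is to reduce the statement to the variational machinery already developed for the Neumann problem and then exploit a rigidity that is simply incompatible with the Dirichlet boundary condition. First I would record the first variation of a Dirichlet eigenvalue under a deformation $g_t$ of the metric, $g_0 = g$, $\dot g_0 = h$: if $u$ is an $L^2(g)$-normalised eigenfunction for a simple eigenvalue $\lambda_k(g)$, then differentiating the Rayleigh quotient $R_{g_t}(v) = \int_M |\nabla v|_{g_t}^2 \dV_{g_t}\big/\int_M v^2 \dV_{g_t}$ with $v = u$ held fixed — legitimate because $u$ is a critical point of $R_g$ on $H^1_0(M)$ — yields
\begin{align*}
\ddt\Big|_{t=0}\lambda_k(g_t) = -\int_M \Big\langle h,\ \mathrm{d}u\otimes\mathrm{d}u - \tfrac12\big(|\nabla u|^2 - \lambda_k u^2\big)\,g \Big\rangle \dV_g .
\end{align*}
The subtle point, and the one place the Dirichlet case could a priori differ from the Neumann one, is that \emph{no boundary integral occurs}: the formula comes from differentiating the two quadratic forms directly, with no integration by parts, and the Dirichlet condition enters only through the fact that the admissible space $H^1_0(M)$ does not change along the deformation. (The classical Hadamard formula, with its boundary term, governs variations of the domain, not of the metric, and plays no role here.) Granting this, the entire argument behind Theorem \ref{main neumann conformal} — the perturbation theory for multiple eigenvalues and the convexity/Hahn–Banach step of Section \ref{derivative of eigenvalues} — transfers essentially verbatim to the Dirichlet Laplacian. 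Since on a surface the Dirichlet energy is conformally invariant (equivalently, the trace of the tensor above is $\lambda_k u^2$ when $\dim M = 2$), the outcome is: if $g$ is conformally extremal for $\overline{\lambda}_k$ then there are eigenfunctions $u_1,\dots,u_m$ associated with $\lambda_k(g)$ with $\sum_{i=1}^m u_i^2 \equiv \tfrac{1}{\lambda_k(g)}$ and $\sum_{i=1}^m |\nabla u_i|^2 \equiv 1$ on $M$, exactly as in conditions (i)–(ii) of Theorem \ref{main neumann conformal}.

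The contradiction is then immediate. The $u_i$ are Dirichlet eigenfunctions, hence $u_i \equiv 0$ on $\partial M$, so $\sum_{i=1}^m u_i^2 \equiv 0$ there; but this same function is identically $\tfrac{1}{\lambda_k(g)} > 0$ on $M$. Therefore no compact surface with boundary admits a conformally extremal metric for $\overline{\lambda}_k$. As extremality (critical behaviour along all deformations) forces conformal extremality (critical behaviour along the subclass of conformal deformations), no extremal metric exists either — which is precisely the phenomenon absent in the Neumann setting, where Theorem \ref{main neumann conformal} and the examples of Section \ref{Examples} show such eigenfunctions, and hence such metrics, do occur.

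Finally, for the displayed consequence: if a smooth metric $g_0$ attained $\lambda^{\#}_k(M) = \inf_g \overline{\lambda}_k(M,g)$, it would be a global minimiser of $\overline{\lambda}_k$, so along any deformation $g_t$ one would have $\ddt\big|_{t=0^+}\overline{\lambda}_k(g_t) \geq 0$, and applying this to the reversed deformation, $\ddt\big|_{t=0^-}\overline{\lambda}_k(g_t) \leq 0$; thus $g_0$ would be extremal, contrary to the above. I expect the only real work to be the transfer of the first-variation and extremality machinery to the Dirichlet setting — verifying the absence of a boundary term and rerunning the convexity argument inside $H^1_0(M)$, including the handling of multiple eigenvalues exactly as for Theorem \ref{main neumann conformal}; once that is secured, the clash with the boundary condition is automatic, and it is this clash, rather than any delicate estimate, that is the heart of the matter.
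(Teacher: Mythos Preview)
Your proposal is correct and shares the paper's core strategy: transfer the first-variation formula and the Hahn--Banach/convexity machinery of Section~\ref{derivative of eigenvalues} to the Dirichlet setting, obtain an identity forcing $\sum_i u_i^2$ to be a positive constant, and contradict the vanishing of the $u_i$ on $\partial M$. The one organisational difference is that you route the argument through \emph{conformal} extremality (deriving $\sum_i u_i^2 \equiv 1/\lambda_k$ as in the Dirichlet analogue of Theorem~\ref{main neumann conformal}) and then invoke the trivial implication ``extremal $\Rightarrow$ conformally extremal''; the paper instead argues directly from full extremality, obtaining the tensorial identity $\sum_i q(u_i) = g$ and tracing it in dimension two to get $\sum_i u_i^2 \equiv 2/\lambda_k$. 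Both constants are positive, so the contradiction is the same. Your route has the modest advantage of yielding Theorems~\ref{main dirichlet} and~\ref{main dirichlet conformal} in one stroke, whereas the paper proves them separately (by parallel arguments). One small note: your claim that $\sum_i |\nabla u_i|^2 \equiv 1$ also holds is true but not needed for the contradiction; it follows from $\sum_i u_i^2$ being constant via the identity $\Delta_g(u_i^2) = 2\lambda_k u_i^2 - 2|\nabla u_i|^2$, which is independent of the boundary condition.
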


\begin{thmA} \label{main dirichlet conformal}
There are no conformally extremal metrics for $\lambda_k$ on any compact Riemannian surface with boundary,  irrespective of the value of $k \geq 1$.
\end{thmA}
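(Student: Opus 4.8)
The plan is to argue by contradiction using the Dirichlet analogue of the characterisation behind Theorem \ref{main neumann conformal}. Suppose $g$ were conformally extremal for $\lambda_k$ on a compact surface with boundary $M^2$. Differentiating the normalised Dirichlet eigenvalue $\lambda_k(M,g)\Vol(M,g)$ along conformal deformations $g_t = e^{t\varphi}g$ and using the Hadamard-type first variation formula for Dirichlet eigenvalues, criticality should force the existence of first Dirichlet eigenfunctions $u_1,\dots,u_m$ associated with $\lambda_k(M,g)$ such that $\sum_i |\nabla u_i|^2$ is constant on $M$ (here the absence of a zeroth-order term, in contrast to the Neumann case, is due to the boundary values vanishing). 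In dimension $n=2$ the conformal factor drops out of $|\nabla u_i|^2\,dV_g$, so after normalising we may assume $\sum_{i=1}^m |\nabla u_i|^2 \equiv 1$ on $\overline{M}$.

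The next step is to derive a contradiction from this identity together with the Dirichlet condition $u_i|_{\partial M}=0$. Each $u_i$ satisfies $\Delta_g u_i = \lambda_k u_i$, so $f:=\sum_i u_i^2$ satisfies $\Delta_g f = 2\lambda_k f - 2\sum_i |\nabla u_i|^2 = 2\lambda_k f - 2$. On $\partial M$ we have $f\equiv 0$, hence $f$ attains its minimum $0$ on the boundary; at an interior minimum point (if $f$ were not identically positive in the interior) we would again get a constraint, but the cleaner route is to integrate. Integrating $\Delta_g f$ over $M$ and applying the divergence theorem gives $\int_{\partial M} \partial_\nu f = 2\lambda_k \int_M f - 2\Vol(M,g)$, while $\partial_\nu f = 2\sum_i u_i\,\partial_\nu u_i = 0$ on $\partial M$ since $u_i$ vanishes there; hence $\int_M f = \Vol(M,g)/\lambda_k$. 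This reproduces the would-be analogue of condition (i) in an averaged form, but it is not yet a contradiction, so the argument must extract more.

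The decisive input, I expect, is the boundary behaviour: pick a point $p\in\partial M$ and choose coordinates so that $\nu$ is the inward normal; since $u_i(p)=0$ and $u_i$ is smooth up to the boundary, $\nabla u_i(p)$ is purely normal, $\nabla u_i(p)=(\partial_\nu u_i)(p)\,\nu$. Then $1 = \sum_i |\nabla u_i(p)|^2 = \sum_i (\partial_\nu u_i(p))^2$ for every $p\in\partial M$. Now consider the vector field or, better, apply a Pohozaev/Rellich identity on $M$: multiplying $\Delta_g u_i = \lambda_k u_i$ by $\langle X,\nabla u_i\rangle$ for a suitable conformal (or merely smooth) vector field $X$ and summing over $i$, the bulk terms combine using $\sum_i |\nabla u_i|^2\equiv 1$ and $\sum_i u_i^2 = f$, while the boundary terms reduce, via $u_i|_{\partial M}=0$ and $\sum_i(\partial_\nu u_i)^2\equiv 1$, to a purely geometric integral over $\partial M$ involving the geodesic curvature and $\langle X,\nu\rangle$. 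Choosing $X$ appropriately (e.g.\ so that $\Div X$ is constant, which on a surface one can arrange via a conformal vector field) makes the left-hand side vanish or become sign-definite, yielding a contradiction with the positivity forced by $\sum_i(\partial_\nu u_i)^2 = 1 > 0$ on the whole boundary. The main obstacle is carrying out this Pohozaev computation cleanly and identifying the right vector field $X$ so that the interior terms collapse; a possible alternative, which may be what the paper does, is to invoke Theorem \ref{main dirichlet} together with a separate argument showing that in a fixed conformal class on a surface the infimum $\inf_{\tilde g\in[g]}\lambda_k(M,\tilde g)\Vol(M,\tilde g)$ coincides with the global $\lambda_k^{\#}(M)$ — or, more directly, to show that the constraint $\sum_i|\nabla u_i|^2\equiv 1$ with Dirichlet data is simply incompatible with $u_i$ vanishing on a nonempty boundary by a unique-continuation / boundary-point argument (Hopf lemma applied to $f$ at $\partial M$ forcing $\partial_\nu f<0$, contradicting $\partial_\nu f=0$ above once one checks $f>0$ in the interior, which follows since $f$ solves $\Delta_g f - 2\lambda_k f = -2 < 0$ and the strong maximum principle rules out an interior zero).
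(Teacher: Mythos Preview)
Your proposal contains a genuine error at the very first step: you have the conformal first-variation identity backwards. Recall from Proposition~\ref{derivative lambda conformal} that the relevant density is
\[
p(f) = \frac{2-n}{2}\,\vert\nabla f\vert^{2} + \frac{n\,\Lambda_k}{2}\,f^{2},
\]
and this formula is \emph{the same} in the Dirichlet and Neumann cases; the boundary condition plays no role in deriving $p$. In dimension $n=2$ the coefficient of the gradient term vanishes, not the coefficient of the zeroth-order term, so the Hahn--Banach/convex-hull argument yields eigenfunctions $u_1,\dots,u_m\in E_k(g)$ with
\[
\lambda_k(g)\sum_{i=1}^{m} u_i^{2} \equiv 1 \quad\text{on } M,
\]
rather than $\sum_i \vert\nabla u_i\vert^{2}\equiv 1$ as you claim. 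Your parenthetical justification (``the absence of a zeroth-order term \dots\ is due to the boundary values vanishing'') is simply mistaken.

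Once one has the correct identity, the contradiction is immediate: the $u_i$ satisfy the Dirichlet condition, so $\sum_i u_i^{2}=0$ on $\partial M$, contradicting $\sum_i u_i^{2}=1/\lambda_k(g)>0$. This is exactly the paper's proof, and it is two lines long. All of the machinery you propose afterwards (Pohozaev/Rellich identities, choice of a conformal vector field, Hopf lemma for $f=\sum_i u_i^{2}$) is unnecessary, and in fact your final Hopf-lemma argument is itself shaky: the equation $\Delta_g f - 2\lambda_k f = -2$ has a zeroth-order coefficient of the wrong sign for the strong maximum principle to exclude interior zeros of $f$ without further work.
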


Regarding Theorems \ref{main dirichlet} and \ref{main dirichlet conformal}, we don't know what can be said for manifolds of dimension $n \geq 3$. The arguments involved in the proofs can only handle the surface case. In light of Theorems \ref{main neumann} and \ref{main dirichlet}, another relevant question is whether $\lambda^{\#}_k(M)$ and $\mu^{\#}_k(M)$ can be attained by Riemannian metrics with finitely many singularities. This problem seems to be open even in the case of closed surfaces and has been proved only under the assumption of a certain spectral gap (see Theorem 2 in \cite{petrides}).

\section*{Acknowledgements}

The author would like to thank Pieralberto Sicbaldi, Leonardo Bonorino and Paolo Piccione for many fruitful and enlightening mathematical conversations. He also thanks the Institute of Mathematics of the University of Granada (IMAG), where the first ideas of this work were conceived, for the hospitality and excellent working conditions. Finally, the thanks M. Karpukhin for the suggestion on how to construct the examples and A. M\'etras for useful comments concerning the proof of Proposition \ref{derivative lambda}. The author was partially supported by grants 2021/03599-7, 2021/09650-4 and 2024/01663-8, São Paulo Research Foundation (FAPESP).

\section{Extremal metrics and the derivative of the eigenvalues} \label{derivative of eigenvalues}

In this section we introduce the definition of extremal metrics for the eigenvalues of the Laplacian, inspired by the works of Nadirashvili \cite{nadirashvili} and Kapurkhin-M\'etras \cite{karpukhin2022}. Then, we compute the derivative of these eigenvalues and show that, in the case of extremal metrics, a certain induced quadratic form cannot be definite on the corresponding eigenspaces.

Let $M^n$ be a compact smooth manifold with boundary. We say that a Riemannian metric $g$ is \textbf{$F$-extremal} for some functional $F$ if for all $1$-parameter smooth family of Riemannian metrics $g(t)$ with $g(0) = g$, we have either
\begin{align*}
F(g(t)) \leq F(g) + o(t) \quad \text{ or } \quad F(t) \geq F(g) + o(t)
\end{align*}
as $t \to 0$. A metric $g$ is \textbf{$F$-conformally extremal} if it is $F$-extremal in its conformal class $[g]$ (that is, we require $g(t) \in [g]$ above).

To unify notation, let $\Lambda_k(g)$ denote either $\lambda_k(M,g)$ or $\mu_k(M,g)$, for $k \geq 1$. Also, let $E_k(g)$ be the corresponding eigenspace in $L^2(M,g)$. The functional $F$ we are interested in is 
\begin{align*}
F(g) = \Lambda_k(g) \Vol(M,g)^{2/n},
\end{align*} 
defined on the space of all smooth Riemannian metrics on $M$.

It is possible to prove that this functional is Lipschitz with respect to the $C^\infty$ topology in the space of Riemannian metrics (see \cite{kriegl}, for instance). The next proposition gives a formula for its derivative along any smooth family of metrics.

\begin{prop} \label{derivative lambda}
Let $(-\varepsilon, \varepsilon) \ni t \mapsto g(t)$ be a smooth $1$-parameter family of Riemannian metrics on $M$. Then the map $t \mapsto \Lambda_k(t) := \Lambda_k(g(t))$ is Lipschitz. Moreover, for an open and dense set $\mathcal{O} \subseteq (-\varepsilon, \varepsilon)$, if $t_0 \in \mathcal{O}$ and $\dot{\Lambda}_k(t_0)$ exists, then
\begin{align*}
\dot{\Lambda}_k(t_0) = - \int_M \langle q(u), h \rangle \dV_{g_{t_0}},
\end{align*}
for any $u \in E_k(g(t_0))$ with $\norm{u}_{L^2(M, g(t_0))} = 1$. Here, $h = \ddt g(t) \vert_{t = t_0}$, $\langle \cdot, \cdot \rangle$ is the inner product  induced by $g(t_0)$ in the space $S^2(M)$
of smooth covariant $2$-tensors, $\dV_{g_{t_0}}$ is the volume element of $M$ induced by $g(t_0)$, and $q : C^{\infty}(M) \to S^2(M)$ is given by
\begin{align*}
q(f) = df \otimes df - \frac{1}{2}\left( \vert \nabla f \vert^2 - \Lambda_k(t_0) f^2 \right) g(t_0).
\end{align*}
\end{prop}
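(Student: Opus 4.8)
The plan is to follow the classical perturbation scheme for the eigenvalues of the Laplacian (Berger, Bando--Urakawa, El Soufi--Ilias), in the form used by Karpukhin--M\'etras, keeping track of where the boundary condition enters. I would first recast everything on a $t$-independent Hilbert space: let $H$ be $W^{1,2}_0(M)$ in the Dirichlet case and $W^{1,2}(M)$ in the Neumann case, which is independent of $t$ as a set because the metrics $g(t)$ are uniformly bi-Lipschitz equivalent on compact $t$-intervals. On $H$ consider the smooth families of bilinear forms $a_t(u,v)=\int_M\langle\nabla u,\nabla v\rangle_{g(t)}\dV_{g(t)}$ and $b_t(u,v)=\int_M uv\,\dV_{g(t)}$, so that $\Lambda_k(t)$ is the $k$-th min--max value of the Rayleigh quotient $a_t(u,u)/b_t(u,u)$. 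Since $t\mapsto g(t)$ is smooth, on compact $t$-intervals there is $C>0$ with $e^{-C|t-s|}\le a_t(u,u)/a_s(u,u)\le e^{C|t-s|}$ and likewise for $b$, uniformly in $u$; hence $\log\Lambda_k$, and therefore $\Lambda_k$, is Lipschitz, and the same estimate gives the Lipschitz property of $F$ for the $C^\infty$ topology.

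\emph{The computation for a smooth eigenbranch.} Suppose $\Lambda(t)=\Lambda_k(t)$ admits a smooth eigenbranch $u(t)$ with $b_{t_0}(u(t_0),u(t_0))=1$. Differentiating the weak identity $a_t(u(t),v)=\Lambda(t)b_t(u(t),v)$, valid for all $v\in H$, at $t=t_0$ with $v=u(t_0)=:u$ held fixed, the cross terms cancel: $a_{t_0}(\dot u(t_0),u)=a_{t_0}(u,\dot u(t_0))=\Lambda(t_0)b_{t_0}(u,\dot u(t_0))=\Lambda(t_0)b_{t_0}(\dot u(t_0),u)$, where the middle equality is the weak equation tested against $\dot u(t_0)\in H$ — this is precisely the place where the boundary condition is used, namely that $\dot u(t_0)$ is an admissible test function (for Dirichlet because $\dot u(t_0)\in W^{1,2}_0$, for Neumann because the weak equation already holds against all of $W^{1,2}$). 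There remains $\dot\Lambda_k(t_0)=\dot a_{t_0}(u,u)-\Lambda_k(t_0)\dot b_{t_0}(u,u)$. Inserting the first-variation formulas $\ddt\dV_{g(t)}\big|_{t_0}=\tfrac12\langle g(t_0),h\rangle\dV_{g(t_0)}$ and $\ddt|\nabla u|^2_{g(t)}\big|_{t_0}=-\langle du\otimes du,h\rangle$ gives $\dot a_{t_0}(u,u)=-\int_M\langle du\otimes du-\tfrac12|\nabla u|^2 g(t_0),h\rangle\dV_{g(t_0)}$ and $\dot b_{t_0}(u,u)=\int_M\langle\tfrac12 u^2g(t_0),h\rangle\dV_{g(t_0)}$, and substituting yields exactly $\dot\Lambda_k(t_0)=-\int_M\langle q(u),h\rangle\dV_{g(t_0)}$.

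\emph{Multiplicities and the set $\mathcal O$.} I would take $\mathcal O$ to be the set of $t_0$ near which $\dim E_k(g(t))$ is constant; it is open by definition and dense because its complement — the set of parameters at which an eigenvalue crossing occurs at index $k$ — is nowhere dense. On a connected component $U$ of $\mathcal O$, with local multiplicity $m$, the cluster of eigenvalues equal to $\Lambda_k$ is isolated from the rest of the spectrum throughout $U$, so the associated spectral projection $P(t)$ is smooth on $U$; consequently $\Lambda_k=\tfrac1m\tr(\mathcal A(t)P(t))$ (with $\mathcal A(t)$ the operator associated to $a_t,b_t$) is smooth on $U$, and the range of $P(t)$ is exactly the $\Lambda_k(t)$-eigenspace. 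Choosing any smooth $b_t$-orthonormal frame $u_1(t),\dots,u_m(t)$ of this range — which exists because $P(t)$ and $b_t$ are smooth — each $u_i(t)$ is a smooth eigenbranch as in the previous paragraph, so $\dot\Lambda_k(t_0)=-\int_M\langle q(u_i(t_0)),h\rangle\dV_{g(t_0)}$; since an arbitrary $b_{t_0}$-unit vector of $E_k(g(t_0))$ may be prescribed as $u_1(t_0)$, the formula holds for every $u\in E_k(g(t_0))$ of unit $L^2(M,g(t_0))$-norm.

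\emph{Expected main obstacle.} The Lipschitz estimate and the tensorial rewriting are routine; the real work is the multiplicity analysis of the last paragraph — verifying that crossings at index $k$ are nowhere dense so that $\mathcal O$ is dense, and that the reduction of the cluster (smoothness of $P(t)$, of $\Lambda_k$ on $U$, and of a $b_t$-orthonormal frame of the range of $P(t)$) goes through despite the reference inner product $b_t$ itself depending on $t$. It is also here that the restriction to $\mathcal O$ is essential: at a point $t_0\notin\mathcal O$ where $\Lambda_k$ happens to be differentiable and $k$ is an interior index of the cluster, the derivative records only an interior eigenvalue of the quadratic form $u\mapsto-\int_M\langle q(u),h\rangle\dV_{g(t_0)}$ on $E_k(g(t_0))$, not its value on the whole eigenspace, and the stated formula can fail.
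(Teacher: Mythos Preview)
Your argument is correct, but the route differs from the paper's in two linked respects. First, your $\mathcal{O}$ is the locus where the full multiplicity of $\Lambda_k$ is locally constant, which requires a spectral gap both below and above the cluster; the paper instead takes $\mathcal{O}$ to be the complement of $\bigcup_{i=2}^{k}\partial A_i$ with $A_i=\{t:\Lambda_i(t)>\Lambda_{i-1}(t)\}$, which only forces the pattern among the \emph{first $k$} eigenvalues to be locally constant and hence demands merely a gap from below. Second, and correspondingly, you invoke perturbation theory to build smooth eigenbranches through the spectral projection onto the whole $\Lambda_k$-eigenspace and then differentiate the weak eigenvalue equation, whereas the paper never constructs eigenbranches at all: given $u_0\in E_k(g(t_0))$ it sets $u_t=u_0-P_t(u_0)$ with $P_t$ the projection onto the span of the lower eigenspaces, observes that $f(t)=\int_M|\nabla u_t|^2\,\dV_{g(t)}-\Lambda_k(t)\int_M u_t^2\,\dV_{g(t)}$ is nonnegative with $f(t_0)=0$, and reads the formula off $\dot f(t_0)=0$. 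Your approach buys smoothness of $\Lambda_k$ on $\mathcal{O}$ as a byproduct and fits cleanly into the Kato framework; the paper's approach is more elementary, yields a slightly larger $\mathcal{O}$, and avoids the need to produce a smooth orthonormal frame with respect to a moving inner product $b_t$. Either way the tensorial computation and the use of the boundary condition (your observation that $\dot u(t_0)$ is an admissible test function) coincide.
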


\begin{proof}
For each $i \geq 2$, let 
\begin{align*}
A_i = \{t \in (-\varepsilon, \varepsilon) : \Lambda_i(t) > \Lambda_{i-1}(t) \}
\end{align*}
and let $B_i$ be the interior of $(-\varepsilon, \varepsilon) \setminus A_i$, so that
\begin{align*}
(-\varepsilon, \varepsilon) = A_i \cup B_i \cup \partial A_i
\end{align*}
is a disjoint union. Let $\mathcal{O}$ be the complement of $\bigcup_{i=2}^k \partial A_i$ in $(-\varepsilon, \varepsilon)$. Since each $\partial A_i$ is closed and has empty interior, $\mathcal{O}$ is open and dense. It is not difficult to see that for any $t_0 \in \mathcal{O}$, there is $1 \leq l \leq k$ such that $\Lambda_l(t) = \Lambda_k(t)$ and $\Lambda_{l-1}(t) < \Lambda_l(t)$ in a neighbourhood around $t_0$. Fix any $t_0 \in\mathcal{O}$ where $\dot{\Lambda}_k(t_0)$ exists and define on this neighbourhood
\begin{align*}
\mathcal{E}(t) = \begin{cases}
\bigcup_{i=1}^{l-1} E_i(g(t)), \text{if } \Lambda_k(t) = \lambda_k(t) \vspace{0.1cm}  \\ 
\bigcup_{i=0}^{l-1} E_i(g(t)), \text{if } \Lambda_k(t) = \mu_k(t)
\end{cases}.
\end{align*} 
Then $\mathcal{E}(t)$ has constant dimension $l-1$ in the Dirichlet case, or $l$ in the Neumann case. Moreover, $\mathcal{E}(t)$ varies smoothly in $t$ for small $t$. Let $P_t : L^2(M, g(t)) \to \mathcal{E}(t)$ be the orthogonal projection onto $\mathcal{E}(t)$. Choose $u_0 \in E_k(g(t_0))$ with $\norm{u_0}_{L^2(M, g(t_0))} = 1$ and let $u_t = u_0 - P_t(u_0)$. Then, the functional 
\begin{align*}
f(t) = \int_M \vert \nabla u_t\vert^2 \dV_t - \Lambda_l(t) \int_M u_t^2 \dV_t
\end{align*}
satisfies $f(t) \geq 0$ on this neighbourhood of $t_0$ and $f(t_0) = 0$, so that $\dot{f}(t_0) = 0$. It remains to compute this derivative. We have
\begin{align*}
\dot{f}(t_0) &= \int_M \left( \ddt \Big|_{t=t_0} \langle \nabla u_t, \nabla u_t \rangle \right) \dV_{g_{t_0}} + \int_M \vert \nabla u_0 \vert^2 \left( \ddt \Big|_{t = t_0} \dV_t \right) \\
&\hphantom{zz}+ \dot{\Lambda}_l(t_0) + \Lambda_l(t_0) \int_M \left( \ddt \Big|_{t = t_0} u_t^2 \right) \dV_{g_{t_0}} + \Lambda_l(t_0) \int_M u_0^2 \left( \ddt \Big|_{t = t_0} \dV_t \right) \\
&=\int_M \left[ 2 \langle \nabla u_0, \nabla \dot{u}_0\rangle  - h(\nabla u_0, \nabla u_0) + \frac{1}{2} \vert \nabla u_0 \vert^2 \tr_{g(t_0)} h \right] \dV_{g_{t_0}} \\
&\hphantom{zz}+ \dot{\Lambda}_l(t_0) + \Lambda_l(t_0) \int_M \left( 2u_0 \dot{u}_0  + \frac{1}{2} u_0^2 \tr_{g(t_0)} h \right) \dV_{g_{t_0}} \\
&= \int_M \left[ 2 \langle \nabla u_0, \nabla \dot{u}_0\rangle - \langle du_0 \otimes du_0 - \frac{1}{2} \vert \nabla u_0 \vert^2 g(t_0), h \rangle \right] \dV_{g_{t_0}}  \\
&\hphantom{zz}+ \dot{\Lambda}_l(t_0) + \Lambda_l(t_0) \int_M \left( 2u_0 \dot{u}_0  + \frac{1}{2} u_0^2 \langle g(t_0), h \rangle \right) \dV_{g_{t_0}} 
\end{align*}
Since $u_0 \in E_l(g(t_0))$, we also have
\begin{align*}
\int_M \langle \nabla u_0, \nabla \dot{u}_0 \rangle \dV_{g_{t_0}} = \Lambda_l(t_0) \int_M u_0 \dot{u}_0 \dV_{g_{t_0}}.
\end{align*}
Thus, rearranging the formula for $\dot{f}(t_0)$, we obtain
\begin{align*}
\dot{\Lambda}_l(t_0) = - \int_M  \langle du_0 \otimes du_0 - \frac{1}{2}\left( \vert \nabla u_0 \vert^2 - \Lambda_l(t_0) u_0^2 \right) g(t_0), h \rangle \dV_{g_{t_0}},
\end{align*}
as we wanted.
\end{proof}

\begin{remark}
Since the map $t \mapsto \Lambda_k(g(t))$ is Lipschitz, it is differentiable almost everywhere, say, in a set $\mathcal{D} \subseteq (-\varepsilon, \varepsilon)$ of total measure. Thus, the formula stated in Proposition \ref{derivative lambda} for the derivative of this map is valid for all $t$ in the dense set $\mathcal{O} \cap \mathcal{D}$.
\end{remark}

Given a Riemannian metric $g$ on $M$, let $L^2(S^2(M),g)$ denote the space of $L^2$ symmetric covariant $2$-tensors on $M$. For $h \in L^2(S^2(M),g)$, let $Q_h : C^\infty(M) \to \mathbb{R}$ be the quadratic form defined by
\begin{align*}
Q_h(u) = - \int_M \langle q(u), h \rangle \dV_g,
\end{align*}
where $q$ is given in Proposition \ref{derivative lambda}.

\begin{prop} \label{isotropic}
If a metric $g$ is $\Lambda_k$-extremal, $k \geq 1$, then for any $h \in L^2(S^2(M),g)$ which satisfies $\int_M \langle g, h \rangle \dV_g = 0$, there exists $u \in E_k(g)$ with $\norm{u}_{L^2(M,g)} = 1$ such that $Q_h(u) = 0$.
\end{prop}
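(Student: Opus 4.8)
The idea is to argue by contradiction: if no such $u$ exists, then $Q_h$ is a definite quadratic form on the eigenspace $E_k(g)$, and we build a volume-preserving deformation of $g$ that strictly increases (or strictly decreases) $\Lambda_k$ to first order, contradicting extremality. So suppose $h \in L^2(S^2(M),g)$ satisfies $\int_M \langle g, h\rangle \dV_g = 0$ but $Q_h(u) \neq 0$ for every nonzero $u \in E_k(g)$. Since $E_k(g)$ is finite-dimensional and $Q_h$ is continuous on it, $Q_h$ has a fixed sign on the unit sphere of $E_k(g)$; say $Q_h(u) > 0$ for all $u \in E_k(g)$ with $\|u\|_{L^2} = 1$, and set $c = \min\{Q_h(u) : \|u\|_{L^2(M,g)} = 1\} > 0$.

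Next I would choose a smooth family of metrics realising $h$ as its infinitesimal variation. One must be slightly careful because $h$ is only $L^2$, whereas Proposition \ref{derivative lambda} requires a smooth family of smooth metrics; so I would first approximate $h$ in $L^2$ by a smooth symmetric $2$-tensor $\tilde h$ with $\int_M \langle g, \tilde h\rangle \dV_g = 0$ and $|Q_h(u) - Q_{\tilde h}(u)|$ small uniformly on the unit sphere of $E_k(g)$ (possible since $q(u)$ is bounded in $L^2$ for $u$ in a compact set), so that $Q_{\tilde h}$ is still positive definite on $E_k(g)$ with a positive lower bound. Then take $g(t) = g + t\tilde h$ for $|t|$ small; this is a smooth family of Riemannian metrics with $g(0) = g$ and $\dot g(0) = \tilde h$. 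The trace-free-type condition $\int_M \langle g, \tilde h \rangle \dV_g = 0$ is exactly $\ddt\big|_{t=0} \Vol(M, g(t)) = 0$, so to first order the volume normalisation plays no role and $F(g(t)) = \Lambda_k(t)\Vol(M,g(t))^{2/n}$ has the same derivative as $\Lambda_k(t)$ wherever the latter exists.

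Now I would invoke Proposition \ref{derivative lambda}: on the open dense set $\mathcal{O}$ (intersected with the full-measure differentiability set $\mathcal{D}$), wherever $\dot\Lambda_k(t_0)$ exists it equals $-\int_M \langle q(u), \tilde h\rangle \dV_{g(t_0)} = Q_{\tilde h}^{(t_0)}(u)$ for a unit eigenfunction $u \in E_k(g(t_0))$. At $t_0 = 0$ this would give $\dot\Lambda_k(0) \geq c' > 0$ if $0 \in \mathcal{O}\cap\mathcal{D}$, but in general $0$ need not lie in that set, so instead I would use a standard Lipschitz/monotonicity argument: by continuity of the whole spectral picture and lower semicontinuity, the quadratic form $Q_{\tilde h}^{(t)}$ on $E_k(g(t))$ stays uniformly positive for $|t|$ small, hence $\dot\Lambda_k(t) \geq c'' > 0$ at every point of $\mathcal{O}\cap\mathcal{D}$ in a neighbourhood of $0$; integrating the Lipschitz function $\Lambda_k$ then yields $\Lambda_k(t) \geq \Lambda_k(0) + c'' t$ for small $t > 0$ and $\Lambda_k(t) \leq \Lambda_k(0) + c'' t$ for small $t < 0$, so $F(g(t)) - F(g)$ changes sign and is not $o(t)$ on either side — contradicting the definition of $\Lambda_k$-extremality. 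The symmetric case $Q_h < 0$ is identical with signs reversed.

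The main obstacle is the regularity mismatch and the fact that $t = 0$ may fail to be a point where $\dot\Lambda_k$ exists: one cannot simply write ``$\dot\Lambda_k(0) > 0$''. The clean way around it is the smoothing of $h$ together with the uniform positivity of $Q_{\tilde h}^{(t)}$ on the smoothly-varying eigenspaces $E_k(g(t))$ for small $t$, which upgrades the pointwise-a.e. derivative formula to a genuine strict monotonicity of the Lipschitz function $\Lambda_k$ near $0$ — and this is precisely the kind of estimate that makes the definition of extremal metric (with its $o(t)$ on both sides) bite.
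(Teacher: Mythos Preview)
Your proposal is correct and follows essentially the same route as the paper: approximate $h$ in $L^2$ by smooth trace-free tensors, build a one-parameter family of metrics, invoke the derivative formula from Proposition~\ref{derivative lambda} at nearby times, and pass to the limit using compactness of eigenfunctions. The only substantive difference is one of framing. The paper argues \emph{directly}: from the extremality inequality on one side it extracts a sequence $t_i \to 0$ in $\mathcal{O}\cap\mathcal{D}$ where $\dot\Lambda_k(t_i)$ is bounded below, picks unit eigenfunctions $u_i \in E_k(g(t_i))$ realising the derivative, and passes to a $C^2$-subsequential limit $u_- \in E_k(g)$ with $Q_h(u_-) \geq 0$; symmetrically it produces $u_+$ with $Q_h(u_+) \leq 0$, and the intermediate value theorem on the finite-dimensional sphere finishes. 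You argue by \emph{contradiction}: assume $Q_h$ is definite on $E_k(g)$, propagate this to uniform definiteness of $Q^{(t)}$ on $E_k(g(t))$ for small $t$, and integrate the a.e.\ derivative to force strict first-order monotonicity of $F$, violating extremality. These are dual formulations of the same mechanism; the paper's sequence-extraction is marginally cleaner because it only needs convergence of \emph{some} eigenfunctions at nearby times rather than a uniform statement over all of $E_k(g(t))$, and it avoids having to track the volume term separately (it normalises volume in the family $g_j(t)$ rather than using $\dot{\Vol}(0)=0$). Your ``uniform positivity'' step is exactly the same compactness argument in disguise (if it failed along a sequence, a subsequential limit would land in $E_k(g)$ with $Q_{\tilde h}\leq 0$), so there is no genuine gap.
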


\begin{proof}
Since $S^2(M)$ is dense in $L^2(S^2(M),g)$, there exists a sequence $h_j \in C^2(S^2(M),g)$ such that $\int_M \langle g, h_j \rangle \dV_g = 0$ for any $j \geq 1$ and $h_j \to h$ in $L^2(S^2(M),g)$ as $j \to\infty$.

For each $j \geq 1$, let $(-a_j, a_j) \ni t \mapsto g_j(t)$ be the smooth $1$-parameter family of metrics defined by
\begin{align*}
g_j(t) = \frac{\Vol(M,g)}{\Vol(M, g + th_j)^{2/n}}(g + th_j),
\end{align*}
for $t \in (-a_j, a_j)$. Then $\Vol(M, g_j(t)) = 1$ for all $t$, $g_j(0) = g$ and $\ddt g_j(t)|_{t = 0} = h_j$. Since $g$ is $\Lambda_k$-extremal, we can assume without loss of generality that
\begin{align*}
\Lambda_k(g_j(t)) \leq \Lambda_k(g) + o(t)
\end{align*}
as $t \to 0$. In particular, taking the limit for negative $t$ yields
\begin{align*}
\lim_{t \to 0^-} \frac{\Lambda_k(g_j(t)) - \Lambda_k(g)}{t} \geq 0.
\end{align*}
So, there exists a sequence of $\varepsilon_i > 0$ decreasing to $0$ and $\delta_i \in \mathbb{R}$ with $\lim_{i \to \infty} \delta_i = 0$ such that
\begin{align*}
\delta_i \leq \frac{\Lambda_k(g_j(-\varepsilon_i)) - \Lambda_k(g)}{\varepsilon_i} = \frac{1}{\varepsilon_i} \int_{-\varepsilon_i}^0 \dot{\Lambda}_k(g_j(t)) \, \mathrm{d}t \leq \operatorname{ess \, sup} \{   \dot{\Lambda}_k(t) : t \in [-\varepsilon_i, 0] \}. 
\end{align*}
We can thus find a sequence of $t_i < 0$ increasing to $0$ such that $\dot{\Lambda}_k(g_j(t_i))$ exists and is given by 
\begin{align*}
\dot{\Lambda}_k(g_j(t_i)) = Q_{\omega_{ij}}(u^{(j)}_i) \geq \delta_i,
\end{align*}
where $\omega_{ij} = \ddt g_j(t)|_{t = t_i}$ and $u^{(j)}_i \in E_k(g_j(t_i))$ with $\norm{u^{(j)}_i}_{L^2(M, g_j(t_i))} = 1$ (see Proposition \ref{derivative lambda}). Passing to a subsequence if necessary, $u^{(j)}_i \to u^{(j)}_-$ in $C^2(M)$ as $i \to \infty$. Then $u^{(j)}_- \in E_k(g_j(0)) = E_k(g)$, $\norm{u^{(j)}_-}_{L^2(M,g)} = 1$ and
\begin{align*}
Q_{h_j}(u^{(j)}_-) = \lim_{i \to \infty} Q_{\omega_{ij}}(u^{(j)}_-) \geq \lim_{i \to \infty} \delta_i \geq 0.
\end{align*}
Again, taking a subsequence if necessary, $u^{(j)}_- \to u_-$ in $C^2(M)$ as $j \to \infty$, and $u_- \in E_k(g)$, $\norm{u_-}_{L^2(M,g)} = 1$. Moreover, 
\begin{align*}
Q_h(u_-) = \lim_{j \to \infty}Q_{h_j}(u^{(j)}_-) \geq 0.
\end{align*}
The same argument starting with the limit for positive values of $t$ produces $u_+ \in E_k(g)$ with $\norm{u_+}_{L^2(M,g)} = 1$. Since $Q_h$ is continuous and takes both nonpositive and nonnegative values on $E_k(g)$, there must exist $u \in E_k(g)$ such that $Q_h(u) = 0$, as claimed.
\end{proof}

The next two results, which deal with deformations of a metric in a fixed conformal class, can be proved in an analogous fashion.

\begin{prop} \label{derivative lambda conformal}
Let $(-\varepsilon, \varepsilon) \ni t \mapsto g(t) = e^{\varphi(t)} g$ be a smooth $1$-parameter family of Riemannian metrics on $M$, conformal to some fixed metric $g$. Then the map $t \mapsto \Lambda_k(t) := \Lambda_k(g(t))$ is Lipschitz. Moreover, for an open and dense set $\mathcal{O} \subseteq (-\varepsilon, \varepsilon)$, if $t_0 \in \mathcal{O}$ and $\dot{\Lambda}_k(t_0)$ exists, then
\begin{align*}
\dot{\Lambda}_k(t_0) = - \int_M  p(u) \dot{\varphi}(t_0) \dV_{g_{t_0}},
\end{align*}
for any $u \in E_k(g(t_0))$ with $\norm{u}_{L^2(M, g(t_0))} = 1$. Here, $p : C^{\infty}(M) \to C^{\infty}(M)$ is given by
\begin{align*}
p(f) = \frac{(2-n)}{2} \vert \nabla f \vert^2 + \frac{n \Lambda_k(t_0)}{2} f^2.
\end{align*}
\end{prop}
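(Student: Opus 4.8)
The plan is to mimic the proof of Proposition~\ref{derivative lambda} almost verbatim, replacing general tensorial deformations by conformal ones and carrying out the bookkeeping of the conformal change of the Laplacian, the volume element, and the $L^2$ pairing. First I would fix $t_0$ in the open dense set $\mathcal{O}$ (defined exactly as before, as the complement of the boundaries $\partial A_i$, $2 \le i \le k$), so that near $t_0$ there is $1 \le l \le k$ with $\Lambda_l(t) = \Lambda_k(t)$ and $\Lambda_{l-1}(t) < \Lambda_l(t)$, and assume $\dot\Lambda_k(t_0)$ exists. I would build the smoothly varying finite-dimensional space $\mathcal{E}(t)$ of eigenfunctions below level $l$ (of dimension $l-1$ in the Dirichlet case, $l$ in the Neumann case), take the orthogonal projection $P_t$ onto it in $L^2(M,g(t))$, pick $u_0 \in E_k(g(t_0))$ normalised in $L^2(M,g(t_0))$, and set $u_t = u_0 - P_t(u_0)$. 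Then the functional $f(t) = \int_M |\nabla u_t|^2 \, \mathrm{d}V_t - \Lambda_l(t) \int_M u_t^2 \, \mathrm{d}V_t$ is $\geq 0$ with $f(t_0) = 0$, hence $\dot f(t_0) = 0$, and the whole content of the proof is extracting $\dot\Lambda_l(t_0)$ from this identity.

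The key computation is differentiating the three $t$-dependent ingredients at $t_0$ for the conformal family $g(t) = e^{\varphi(t)} g$. Writing $\dot\varphi := \dot\varphi(t_0)$, one has $\ddt\big|_{t_0} \mathrm{d}V_t = \tfrac{n}{2} \dot\varphi \, \mathrm{d}V_{g_{t_0}}$, and for the Dirichlet energy density one uses $|\nabla u|^2_{g(t)} \, \mathrm{d}V_{g(t)} = e^{(n-2)\varphi(t)/2 \cdot \text{(appropriate power)}} \cdots$ — more precisely, since $\langle \mathrm{d}u, \mathrm{d}u\rangle_{g(t)} = e^{-\varphi(t)} \langle \mathrm{d}u, \mathrm{d}u \rangle_{g_{t_0}}$ at points, the product $|\nabla u_0|^2_{g(t)} \, \mathrm{d}V_{g(t)}$ scales like $e^{(n/2 - 1)\varphi(t)}$ relative to the value at $t_0$, giving $\ddt\big|_{t_0} \big( |\nabla u_0|^2_{g(t)} \, \mathrm{d}V_{g(t)} \big) = \tfrac{n-2}{2} \dot\varphi \, |\nabla u_0|^2_{g_{t_0}} \, \mathrm{d}V_{g_{t_0}}$; similarly the $L^2$ term $u_0^2 \, \mathrm{d}V_{g(t)}$ contributes $\tfrac{n}{2}\dot\varphi \, u_0^2 \, \mathrm{d}V_{g_{t_0}}$. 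Combining these with the cross terms $2\langle \nabla u_0, \nabla \dot u_0\rangle$ and $2 u_0 \dot u_0$, and using the eigenvalue identity $\int_M \langle \nabla u_0, \nabla \dot u_0\rangle \, \mathrm{d}V_{g_{t_0}} = \Lambda_l(t_0) \int_M u_0 \dot u_0 \, \mathrm{d}V_{g_{t_0}}$ to cancel the $\dot u_0$ contributions exactly as in Proposition~\ref{derivative lambda}, I would solve $\dot f(t_0) = 0$ for $\dot\Lambda_l(t_0) = \dot\Lambda_k(t_0)$ and obtain
\begin{align*}
\dot\Lambda_k(t_0) = - \int_M \left( \frac{n-2}{2} |\nabla u_0|^2 + \frac{n \Lambda_k(t_0)}{2} u_0^2 \right) \dot\varphi(t_0) \, \mathrm{d}V_{g_{t_0}},
\end{align*}
which is exactly $-\int_M p(u_0)\dot\varphi(t_0)\,\mathrm{d}V_{g_{t_0}}$.

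I should also note why the conformal setting needs no change in the Lipschitz statement or the smooth dependence of $\mathcal{E}(t)$: these follow from the same perturbation-theoretic facts about analytic (indeed smooth) families of operators used implicitly in Proposition~\ref{derivative lambda}, and the family $g(t) = e^{\varphi(t)}g$ is a smooth family of metrics, so the earlier arguments apply directly. The one genuinely new point — and the step I expect to be the main (though modest) obstacle — is getting the two scaling exponents right: the Dirichlet energy density picks up the factor $\tfrac{n-2}{2}$ because it combines the $e^{-\varphi}$ from the inverse metric with the $e^{n\varphi/2}$ from the volume form, whereas the potential term only sees the volume form. This is precisely what produces the contrast between the coefficients $(2-n)/2$ and $n\Lambda_k(t_0)/2$ in $p(f)$, and in dimension $n = 2$ it makes the gradient term drop out entirely, which is the structural reason the surface case behaves specially in Theorems~\ref{main dirichlet} and \ref{main dirichlet conformal}. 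Everything else is a line-by-line transcription of the previous proof with $h$ replaced by $\dot\varphi(t_0)\, g$ in the pairings.
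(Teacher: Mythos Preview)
Your approach is exactly what the paper intends: it gives no explicit proof of this proposition, merely stating that it ``can be proved in an analogous fashion'' to Proposition~\ref{derivative lambda}, and your outline is precisely that transcription with $h$ specialised to $\dot\varphi(t_0)\,g(t_0)$ and the conformal scaling worked out. One small slip: in your final displayed formula the coefficient on $|\nabla u_0|^2$ should be $\tfrac{2-n}{2}$, not $\tfrac{n-2}{2}$, in order to actually equal $-\int_M p(u_0)\dot\varphi(t_0)\,\mathrm{d}V_{g_{t_0}}$; your own scaling computation (the Dirichlet density contributes $+\tfrac{n-2}{2}\dot\varphi|\nabla u_0|^2$ and the potential term contributes $-\tfrac{n}{2}\Lambda_k\dot\varphi\,u_0^2$ to $\dot f(t_0)$, and solving $\dot f(t_0)=0$ for $\dot\Lambda_k$ flips the sign on the first term) gives the correct result.
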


Given a Riemannian metric $g$ on $M$ and a function $\psi \in L^2(M, g)$ , let $P_\psi : C^\infty(M) \to \mathbb{R}$ be the quadratic form defined by
\begin{align*}
P_\psi(u) = - \int_M  p(u) \psi \dV_g,
\end{align*}
where $p$ is given in Proposition \ref{derivative lambda conformal}.

\begin{prop} \label{isotropic conformal}
If a metric $g$ is conformally $\Lambda_k$-extremal, $k \geq 1$, then for any $\psi \in L^2(M,g)$ which satisfies $\int_M \psi \dV_g = 0$, there exists $u \in E_k(g)$ with $\norm{u}_{L^2(M,g)} = 1$ such that $P_\psi(u) = 0$.
\end{prop}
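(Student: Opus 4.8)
The plan is to follow the proof of Proposition~\ref{isotropic} almost verbatim, replacing the tensorial perturbation by a volume-fixed \emph{conformal} one and using Proposition~\ref{derivative lambda conformal} in place of Proposition~\ref{derivative lambda}.

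First I would reduce to smooth $\psi$: since $C^\infty(M)$ is dense in $L^2(M,g)$, pick $\psi_j \in C^\infty(M)$ with $\int_M \psi_j \dV_g = 0$ for every $j$ and $\psi_j \to \psi$ in $L^2(M,g)$. For each $j$ define the conformal family
\begin{align*}
g_j(t) = \frac{\Vol(M,g)}{\Vol(M, e^{t\psi_j}g)^{2/n}}\, e^{t\psi_j}g = e^{\varphi_j(t)} g,
\end{align*}
which satisfies $g_j(0) = g$ and $\Vol(M, g_j(t)) = \Vol(M,g)$ for all $t$; because $\psi_j$ has zero mean, a direct computation gives $\dot{\varphi}_j(0) = \psi_j$. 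This is the one step that is genuinely conformal, and it is precisely where the hypothesis $\int_M \psi \dV_g = 0$ enters.

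Next, since $g$ is conformally $\Lambda_k$-extremal and $g_j(t) \in [g]$ has constant volume, we may assume without loss of generality that $\Lambda_k(g_j(t)) \leq \Lambda_k(g) + o(t)$ as $t \to 0$, so that the one-sided limit from $t < 0$ obeys $\liminf_{t \to 0^-} \frac{\Lambda_k(g_j(t)) - \Lambda_k(g)}{t} \geq 0$. Exactly as in Proposition~\ref{isotropic}, using that $t \mapsto \Lambda_k(g_j(t))$ is Lipschitz (hence absolutely continuous) and that the formula of Proposition~\ref{derivative lambda conformal} holds on a dense set of parameters, I would extract $t_i \uparrow 0$ and $\delta_i \to 0$ with
\begin{align*}
\dot{\Lambda}_k(g_j(t_i)) = P_{\dot{\varphi}_j(t_i)}(u^{(j)}_i) \geq \delta_i
\end{align*}
for some $u^{(j)}_i \in E_k(g_j(t_i))$ of unit $L^2$ norm. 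Elliptic regularity and finite dimensionality of the eigenspaces yield a $C^2$-convergent subsequence $u^{(j)}_i \to u^{(j)}_- \in E_k(g)$ with $\norm{u^{(j)}_-}_{L^2(M,g)} = 1$; splitting $P_{\dot{\varphi}_j(t_i)}(u^{(j)}_i) - P_{\psi_j}(u^{(j)}_-)$ into one term controlled by $\norm{u^{(j)}_i - u^{(j)}_-}_{C^2}$ and one controlled by $\norm{\dot{\varphi}_j(t_i) - \psi_j}_{L^2}$, both tending to $0$, gives $P_{\psi_j}(u^{(j)}_-) \geq 0$. A further subsequential limit $u^{(j)}_- \to u_- \in E_k(g)$ of unit norm, together with $\psi_j \to \psi$ in $L^2$ and continuity of $p$ from the $C^1$ to the $C^0$ topology, gives $P_\psi(u_-) \geq 0$. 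Running the argument with the one-sided limit from $t > 0$ produces $u_+ \in E_k(g)$ of unit norm with $P_\psi(u_+) \leq 0$. Since $P_\psi$ is continuous on $E_k(g)$ and assumes values of both signs there, it must vanish at some unit-norm $u \in E_k(g)$, which is the assertion.

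The main obstacle is exactly the one already met in Proposition~\ref{isotropic}: $\Lambda_k$ is merely Lipschitz, so $\dot{\Lambda}_k$ need not exist at $t = 0$, and one must descend to nearby parameters where it does exist and is represented by the quadratic form, then propagate the various convergences of eigenfunctions through the double limit $i \to \infty$, $j \to \infty$. The conformal ingredient — producing a volume-fixed conformal family with prescribed velocity $\dot{\varphi}_j(0) = \psi_j$ — is elementary.
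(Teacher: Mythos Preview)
Your proposal is correct and is precisely the ``analogous fashion'' the paper alludes to: it reproduces the proof of Proposition~\ref{isotropic} with the tensorial deformation $g+th_j$ replaced by the volume-normalised conformal deformation $e^{\varphi_j(t)}g$ and with Proposition~\ref{derivative lambda conformal} playing the role of Proposition~\ref{derivative lambda}. The only conformal-specific input---that the zero-mean condition on $\psi_j$ forces $\dot{\varphi}_j(0)=\psi_j$---is exactly the point you single out, so there is nothing to add.
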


\section{Proofs of the Theorems} \label{Proofs}

In this section we present the proofs of the theorems stated in the Introduction.

\begin{proof}[Proof of Theorem \ref{main neumann}]
Let $M^n$ be a compact manifold with boundary and suppose $g$ is a smooth Riemannian metric which is extremal for $\mu_k$ for some $k \geq 1$. Let $K$ be the convex hull in $L^2(S^2(M),g)$ of the set $\{ q(u) : u \in E_k(g) \}$. We claim that $g \in K$. If not, then since $K$ is a convex cone which lies in a finite dimensional subspace of $L^2(S^2(M),g)$, the Hahn-Banach theorem furnishes $h \in L^2(S^2(M),g)$ such that
\begin{align*}
\int_M \langle g, h \rangle \dV_g > 0 \quad \text{ and } \quad \int_M \langle q(u), h \rangle \dV_g < 0 \quad \text{for all } u \in E_k(g) \setminus \{0\}.
\end{align*}
Let $\tilde{h}$ be the projection of $h$ onto the subspace of symmetric covariant $2$-tensors whose traces have zero mean. Explicitly, define
\begin{align*}
\tilde{h} = h - \left( \frac{\int_M \langle g, h \rangle \dV_g}{n \Vol(M,g)} \right) g.
\end{align*}
Then $\int_M \langle g, \tilde{h} \rangle \dV_g = 0$ and for any $u \in E_k(g)$ we have
\begin{align*}
Q_{\tilde{h}}(u) &= \int_M \langle q(u), \tilde{h} \rangle \dV_g = \int_M \langle q(u), h \rangle \dV_g  - \left( \frac{\int_M \langle g, h \rangle \dV_g}{n \Vol(M,g)} \right) \int_M \langle q(u), g \rangle\dV_g \\
&= \int_M \langle q(u), h \rangle \dV_g + \left( \frac{\int_M \langle g, h \rangle \dV_g}{n \Vol(M,g)} \right) \int_M \left[ \vert \nabla u \vert^2 - \frac{n}{2} \left( \vert \nabla u \vert^2 - \mu_k(g) u^2 \right) \right] \dV_g \\
&= \underbrace{\int_M \langle q(u), h \rangle \dV_g}_{ \leq 0} - \underbrace{\left( \frac{\int_M \langle g, h \rangle \dV_g}{n \Vol(M,g)} \right)}_{ > 0} \underbrace{\int_M \vert \nabla u \vert^2 \dV_g}_{ > 0} < 0,
\end{align*}
which contradicts Proposition \ref{isotropic}. Thus $g \in K$ and there exist independent eigenfunctions $u_1, \dots, u_m \in E_k(g)$ such that 
\begin{align} \label{equation mu_k}
\sum_{i = 1}^m \left[ du_i \otimes du_i  - \frac{1}{2} \left( \vert \nabla u_i \vert^2 - \mu_k(g) u_i^2 \right)g \right] = g.
\end{align}
Taking the trace of this equation and rearranging, we obtain
\begin{align} \label{equation2 mu_k}
\frac{\mu_k(g)}{2} \sum_{i=1}^{m} u_i^2 = 1 + \frac{(n-2)}{2n} \sum_{i=1}^{m} \vert \nabla u_i \vert^2
\end{align}

Let $u = (u_1, \dots, u_m) : M \to \mathbb{R}^{m}$. If $n= 2$, equation (\ref{equation2 mu_k}) immediately gives that $\vert u \vert^2 = \frac{2}{\mu_k(g)}$ and then equation (\ref{equation mu_k}) shows that $g = \sum_{i=1}^{m} \mathrm{d}u_i \otimes \mathrm{d}u_i $. If $n \geq 3$, let $f = \vert u \vert^2 - \frac{n}{\mu_k(g)}$. After some simple calculations we see that $f$ satisfies the following boundary value problem:
\begin{align*}
\begin{cases}
\Delta_g f = -\frac{4\mu_k(g)}{n-2} f, \quad \text{on } M \\
\langle \nabla f, \nu \rangle = 0, \quad \text{on } \partial M
\end{cases},
\end{align*}
where $\nu$ is the outward unit normal along $\partial M$. Since the Neumann Laplacian is a positive self-adjoint operator, it follows that $f$ vanishes. So, $\vert u \vert^2 = \frac{n}{\mu_k(g)}$. As before, equation (\ref{equation mu_k}) shows that $g = \sum_{i=1}^{m} \mathrm{d}u_i \otimes \mathrm{d}u_i$. 

In any case, $u$ is an isometric immersion from $(M,g)$ into the round sphere of radius $\sqrt{\frac{n}{\mu_k(g)}}$ around the origin of $\mathbb{R}^m$. But the normal derivative of $u$ along the boundary of $M$ vanishes since all component of $u$ satisfy a Neumann boundary condition. This contradiction shows that $g$ cannot be extremal for $\mu_k$.   
\end{proof}

\begin{proof}[Proof of Theorem \ref{main dirichlet}]
The argument is similar to that of the proof of Theorem \ref{main neumann}. Exactly as in that proof, we can show that $g$ is a convex combination of Dirichlet eigenfunctions $u_1, \dots, u_m \in E_k(g)$:
\begin{align} \label{equation lambda_k}
\sum_{i = 1}^m \left[ du_i \otimes du_i  - \frac{1}{2} \left( \vert \nabla u_i \vert^2 - \lambda_k(g) u_i^2 \right)g \right] = g.
\end{align}
Again, taking the trace we obtain
\begin{align} \label{equation2 lambda_k}
\frac{\lambda_k(g)}{2} \sum_{i=1}^{m} u_i^2 = 1 + \frac{(n-2)}{2n} \sum_{i=1}^{m} \vert \nabla u_i \vert^2
\end{align}

Let $u = (u_1, \dots, u_m) : M \to \mathbb{R}^{m}$. As $\dim M = n= 2$, equation (\ref{equation2 lambda_k}) immediately yields $\vert u \vert^2 = \frac{2}{\lambda_k(g)}$. This contradicts the fact that $u$ vanishes along the boundary of $M$ since all its components satisfy a Dirichlet boundary condition.
\end{proof}

\begin{remark}
If $n \geq 3$,  some simple calculations show that the function $f = \vert u \vert^2$ in the proof of Theorem \ref{main dirichlet} satisfies the following boundary problem:
\begin{align*}
\begin{cases}
\Delta_g f = -\frac{4\lambda_k(g)}{n-2} f + \frac{4n}{n-2}, \quad \text{on } M, \\
f = 0, \quad \text{on } \partial M
\end{cases}
\end{align*}
This problem has a unique solution in $C^\infty(M)$ by the Fredholm alternative. Thus, there is no contradiction in this case and nothing can be concluded from this argument.
\end{remark}

We now prove Theorem \ref{main neumann conformal} and Theorem \ref{main dirichlet conformal}. 

\begin{proof}[Proof of Theorem \ref{main dirichlet conformal}]
Let $M^2$ be a compact surface with boundary and suppose $g$ is a smooth Riemannian metric which is conformally extremal for $\lambda_k$ for some $k \geq 1$. By considering $K$ the convex hull in $L^2(M,g)$ of the set $\{ p(u) : u \in E_k(g) \}$ and reasoning as in the proof of Theorem \ref{main neumann}, we can show that $1 \in K$. So, there exist independent eigenfunctions $u_1, \dots, u_m \in E_k(g)$ such that
\begin{align} 
\frac{(2-n)}{2} \sum_{i=1}^m \vert \nabla u_i \vert^2 + \frac{n \lambda_k(g)}{2} \sum_{i = 1}^m u_i^2 = 1.
\end{align}
Since $n = 2$, this equation implies that $\sum_{i = 1}^m u_i^2 = \frac{1}{\lambda_k(g)}$, which contradicts the fact that all $u_i$ vanish along $\partial M$.
\end{proof}

\begin{proof}[Proof of Theorem \ref{main neumann conformal}]
Let $M^n$ be a compact manifold with boundary and suppose $g$ is a smooth Riemannian metric which is conformally extremal for $\mu_k$ for some $k \geq 1$. By considering $K$ the convex hull in $L^2(M,g)$ of the set $\{ p(u) : u \in E_k(g) \}$ and reasoning as in the proof of Theorem \ref{main neumann}, we can show that $1 \in K$. So, there exist independent eigenfunctions $u_1, \dots, u_m \in E_k(g)$ such that
\begin{align} \label{sum}
\frac{(2-n)}{2} \sum_{i=1}^m \vert \nabla u_i \vert^2 + \frac{n \mu_k(g)}{2} \sum_{i = 1}^m u_i^2 = 1.
\end{align}

We claim that $\vert u \vert^2 = \frac{1}{\mu_k(g)}$. This is clear if $n=2$, so suppose $n \geq 3$. Notice that 
\begin{align} \label{laplacian product}
\Delta_g \vert u_i \vert^2 = 2 \mu_k(g) \vert u_i \vert^2 - 2 \vert \mathrm{d} u_i \vert^2. 
\end{align}
for any $i=1, \dots, m$. Combining (\ref{laplacian product}) and (\ref{sum}), we see that the map $f = \vert u \vert^2 - \frac{1}{\mu_k(g)}$ satisfies the following boundary value problem:
\begin{align}
\begin{cases}
\Delta_g f = -\frac{4 \mu_k(g)}{n-2} f, \quad \text{on } M\\
\langle \nabla f, \nu \rangle = 0, \quad \text{on } \partial M
\end{cases}.
\end{align}
By the positivity of $\Delta_g$ with Neumann boundary condition, we conclude that $f$ vanishes, i.e., $\vert u \vert^2 = \frac{1}{\mu_k(g)}$. In any case, (\ref{sum}) implies that $\vert \mathrm{d} u \vert^2 = 1$, as we wanted.

Conversely, suppose that $u_1, \dots, u_m \in E_k(g)$ satisfy hypotheses (i) and (ii) and suppose $\mu_k(g) > \mu_{k-1}(g)$ (the other case is treated similarly). Let $g(t) = e^{\varphi(t)}g$ be a smooth $1$-parameter family of Riemannian metrics with $g(0) = 0$ and $\Vol(M,g(t)) = \Vol(M,g)$, so that $\int_M \dot{\varphi} \dV_g = 0$. We have:
\begin{align*}
\sum_{i=1}^m P_{\dot{\varphi}}(u_i) &= - \sum_{i = 1}^m \int_M p(u_i) \dot{\varphi} \dV_g = \int_M \left( \frac{n-2}{2} \sum_{i=1}^m \vert \nabla u_i \vert^2  - \frac{n \mu_k(g)}{2} \sum_{i=1}^m u_i^2 \right) \dot{\varphi} \dV_g \\
&= \int_M \left( \frac{n-2}{2} - \frac{n}{2} \right) \dot{\varphi} \dV_g = 0.
\end{align*}
Thus, letting $E = \operatorname{span} \{u_1, \dots, u_m\}$, we see that there exist $u_{+}, u_{-} \in E$ with $\pm P_{\dot{\varphi}}(u_\pm) \leq 0$. Since $\mu_k(g) > \mu_{k-1}(g)$, $P_{\dot{\varphi}}(u_+) \leq 0$ gives that $\lim_{t \to 0^+} \frac{\mu_k(g(t)) - \mu(g)}{t} \leq 0$ and $P_{\dot{\varphi}}(u_+) \leq 0$ implies that $\lim_{t \to 0^-} \frac{\mu_k(g(t)) - \mu(g)}{t} \geq 0$. This proves that $\mu_k(g(t)) \leq \mu_k(g) + o(t)$ as $t \to 0$. Since $g(t)$ was arbitrary, we conclude that $g$ is conformally extremal for $\mu_k$, as desired.

\section{Examples} \label{Examples}

To conclude, we present examples of conformally extremal metrics for the Neumann eigenvalues in an annulus. The idea is the following: we first find an appropriate domain in the unduloid in $\mathbb{R}^3$ for which the normal derivative of the Gauss map vanishes along the boundary. Since the unduloid has constant mean curvature (CMC), a well-known theorem of Ruh-Vilms \cite{ruh} guarantees that the Gauss map is (nondegenerate) harmonic. We can thus apply Corollary \ref{n-harmonic} to conclude that there is a conformal factor for which the corresponding metric is conformally extremal for the eigenvalue $1$. Let us proceed to the formalities.

Let $x, z \in C^2(\mathbb{R})$, with $x > 0$, and consider the surface of revolution $M$ obtained by rotating the curve $C : v \mapsto (x(v), z(v))$ in the $xz$ plane around the $z$ axis. A parametrization of $M$ is given by $\varphi : \mathbb{R} \times [0, 2\pi]$, defined by
\begin{align*}
\varphi(u,v) = (x(v) \cos u, x(v) \sin u, z(v)),
\end{align*}
and the coefficients of metric $g$ on $M$ induced by $\mathbb{R}^3$ are:
\begin{align*}
E(u,v) &= \left\langle \frac{\partial \varphi}{\partial u}(u,v), \frac{\partial \varphi}{\partial u}(u,v) \right\rangle = x(v)^2  \\
F(u,v) &= \left\langle \frac{\partial \varphi}{\partial u}(u,v), \frac{\partial \varphi}{\partial v}(u,v) \right\rangle = 0\\
G(u,v) &= \left\langle \frac{\partial \varphi}{\partial v}(u,v), \frac{\partial \varphi}{\partial v}(u,v) \right\rangle = x'(v)^2 + z'(v)^2.
\end{align*}
A unit normal vector field $N$ for $M$ is
\begin{align*}
N(u,v) = \frac{1}{\sqrt{G(u,v)}}(z'(v) \cos u, z'(v) \sin u, -x'(v)). 
\end{align*}

Given $a < b$, consider the domain 
\begin{align*}
\Sigma_{a,b} = \{\varphi(u, v) : u \in [0, 2 \pi], v \in [a, b] \} \subset M.
\end{align*}
The outward unit normals for $\partial \Sigma_{a,b}$ are 
\begin{align*}
\nu_a(u) = -\frac{1}{\sqrt{G(u,a)}} \frac{\partial \varphi}{\partial v}(u, a) = - \frac{1}{\sqrt{G(u,a)}}(x'(a) \cos u, x'(a) \sin u, z'(a)) \\
\end{align*}
and
\begin{align*}
\nu_b(u) = \frac{1}{\sqrt{G(u,b)}}\frac{\partial \varphi}{\partial v}(u, b) = \frac{1}{\sqrt{G(u,b)}}(x'(b) \cos u, x'(b) \sin u, z'(b)) \\
\end{align*}
Recall that the signed curvature $k$ of the curve $C$ is given by the formula
\begin{align*}
k(v) = \frac{x'(v)z''(v) - x''(v)z'(v)}{\sqrt{x'(v)^2 + z'(v)^2}}.
\end{align*}
After some straightforward computations, we see that the normal derivatives of $N$ along $\partial \Sigma_{a,b}$ are
\begin{align*}
\begin{cases}
\displaystyle \nu_a(N)(u) = -\frac{1}{\sqrt{G(u,a)}} \frac{\partial N}{\partial v}(u,a) = -\frac{k(a)}{\sqrt{G(u,a)}} \frac{\partial \varphi}{\partial v}(u,a) \vspace{0.2cm}\\
\displaystyle \nu_b(N)(u) = \frac{1}{\sqrt{G(u,b)}} \frac{\partial N}{\partial v}(u,b) = \frac{k(b)}{\sqrt{G(u,b)}} \frac{\partial \varphi}{\partial v}(u,b)
\end{cases}.
\end{align*} 
Thus, if  the curvature $k(v)$ vanishes at $v = a$ and $v = b$, then the components of $N$ satisfy the Neumann boundary condition in $\Sigma_{a,b}$.

Let now $M$ be an unduloid in $\mathbb{R}^3$, whose generating curve $C$ is obtained \cite{hadzhilazova, mladenov} by considering
\begin{align*}
x(v) &= \sqrt{\beta \sin(\mu v) + \delta} \\
z(v) &= \alpha F \left( \frac{\mu v}{2} - \frac{\pi}{4}, \kappa \right)+ \gamma E \left( \frac{\mu v}{2} - \frac{\pi}{4}, \kappa \right),
\end{align*}
where $0 < \alpha < \gamma$ are parameters, $F(\varphi, k)$ and $E(\varphi, k)$ are the elliptic integrals of the first and second kind, respectively, and 
\begin{align*}
\mu = \frac{2}{\alpha + \gamma}, \quad \kappa^2 = \frac{\gamma^2 - \alpha^2}{\gamma^2}, \quad \beta = \frac{\gamma^2 - \alpha^2}{2}, \quad \delta = \frac{\gamma^2 + \alpha^2}{2}.
\end{align*}

It is clear that $C$ is periodic of a certain period $T = T(\alpha, \gamma)$ and its curvature $k$ vanishes twice in the interval $[0,T]$, say at $v = v^\ast$ and $v = v^{\ast \ast}$ (see Figure \ref{unduloid}). Let $v^\ast_n = v^\ast + nT$ and $v^{\ast\ast}_n = v^{\ast\ast} + n T$, $n \in \mathbb{Z}$, be all the vanishing points of $k$. By what we have seen before, the components of the Gauss map $N$ satisfy the Neumann boundary condition in the domains
\begin{align*}
\Sigma^\ast_{n} &= \{ \varphi(u, v) : u \in [0,2\pi], v \in [v^\ast, v^{\ast}_n]\}, \quad n \geq 1 \\
\Sigma^{\ast\ast}_{n} &= \{ \varphi(u, v) : u \in [0,2\pi], v \in [v^\ast, v^{\ast\ast}_n]\}, \quad n \geq 0
\end{align*}

Moreover, the expressions for the principal curvatures of the unduloid \cite{hadzhilazova} are
\begin{align*}
k_1(v) &= \frac{(\gamma - \alpha)\left[ \gamma - \alpha  + (\alpha + \gamma) \sin \left( \frac{2v}{\alpha + \gamma} \right)\right]}{(\alpha + \gamma) \left[ \alpha^2 + \gamma^2 + (\gamma^2 - \alpha^2) \sin \left( \frac{2v}{\alpha  +\gamma} \right)\right]} \\ 
k_2(v) &= \frac{\alpha + \gamma + (\gamma - \alpha) \sin \left( \frac{2v}{\alpha + \gamma} \right) }{\alpha^2 + \gamma^2 + (\gamma^2 - \alpha^2) \sin \left( \frac{2v}{\alpha + \gamma} \right)}.
\end{align*} 
It is easy to see that $k_2(v) \geq \frac{\alpha}{\gamma^2} > 0$. In particular, $\mathrm{d} N \neq 0$ everywhere. Thus, $N : \Sigma^\ast_{n} \to \mathbb{S}^2$ and $N : \Sigma^{\ast\ast}_{n} \to \mathbb{S}^2$ are nondegenerate harmonic maps whose components satisfy the Neumann condition along the respective boundary. Applying Corollary \ref{n-harmonic}, we conclude that the metric $\tilde{g} = \vert \mathrm{d} N \vert^2 g$ on the annuli $[v^\ast,v^\ast_n] \times \mathbb{S}^1$ ($n \geq 1$) and $[v^\ast,v^{\ast\ast}_n] \times \mathbb{S}^1$ ($n \geq 0$) are conformally extremal for the Neumann eigenvalue $1$. Varying $n$, we obtain infinitely many such metrics, no two of which are conformally equivalent.

\begin{figure}
\includegraphics[width=14cm]{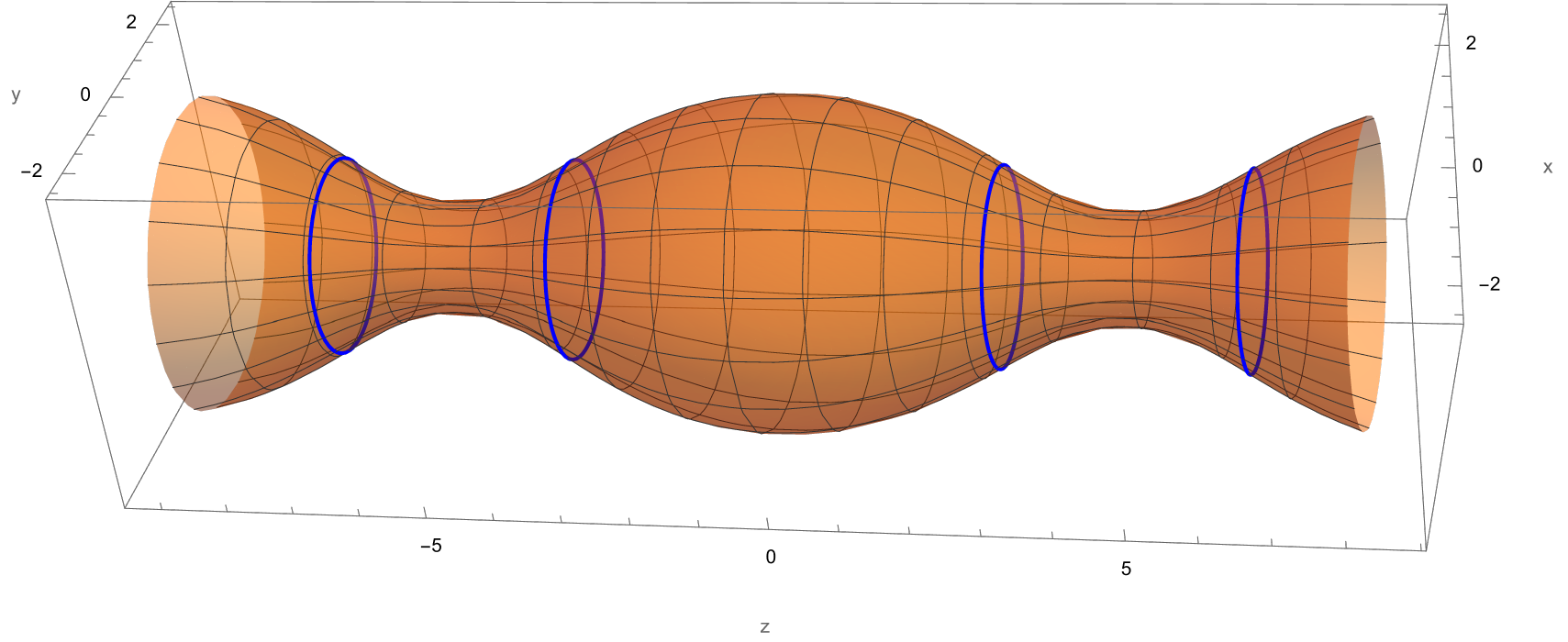}
\caption{Unduloid with parameters $\alpha = 0.8$ and $\gamma = 2.4$. The circles indicate where the curvature of the generating curve vanishes ($v \approx -4.22, -0.78, 5.81, 9.26$). Any domain bounded by two of these circles can be considered to construct a conformally extremal metric.}
\label{unduloid}
\end{figure}

\end{proof}


\begin{thebibliography}{99}

\bibitem{berger}
M. Berger, 
\emph{Sur les premi\`eres valeurs propres des vari\'et\'es riemannienes},
Compos. Math. \textbf{26} (2) (1973), 129--149.

\bibitem{colbois}
B. Colbois and J. Dodziuk,
\emph{Riemannian metrics with large $\lambda_1$},
Proc. Am. Math. Soc. \textbf{122} (3) (1994), 905--906.

\bibitem{elsoufi1986} 
A. El Soufi and S. Ilias,
\emph{Immersions minimales, premi\`ere valeur propre du laplacien et volume conforme},
Math. Ann. \textbf{275} (1986), 257--267.

\bibitem{elsoufi2000}
A. El Soufi and S. Ilias,
\emph{Riemannian manifolds admitting isometric immersions by their first eigenfunctions},
Pacific J. Math. \textbf{195} (1) (2000), 91--99.

\bibitem{elsoufi2002}
A. El Soufi and S. Ilias,
\emph{Extremal metrics for the first eigenvalue of the Laplacian in a conformal class},
Proc. Am. Math. Soc. \textbf{131} (5) (2002), 1611--1618.

\bibitem{elsoufi2006}
A. El Soufi, H. Giacomini and M. Jazar,
\emph{A unique extremal metric for the least eigenvalue of the Laplacian on the Klein bottle},
Duke Math. J. \textbf{135} (1) (2006), 181--202.

\bibitem{elsoufi2008}
A. El Soufi and S. Ilias,
\emph{Laplacian eigenvalue functionals and metric deformations on compact manifolds},
J. Geom. Phys. \textbf{58} (1) (2008), 89--104.

\bibitem{fraser}
A. Fraser and R. Schoen,
\emph{Minimal surfaces and eigenvalue problems},
Contemp. Math.\textbf{599} (2013), 105--121.

\bibitem{hadzhilazova}
M. Hadzhilazova, I. Mladenov and J. Oprea,
\emph{Unduloids and their geometry}
Archivum Mathematicum, \textbf{43} (5) (2007),  417--429.

\bibitem{hersch}
J. Hersch, 
\emph{Quatre propri\'et\'es isop\'erim\'etriques de membranes sph\'eriques homog\`enes},
C. R. Acad. Sci. Paris S\'er A-B \textbf{270} (1970), A1645--A1648.

\bibitem{karpukhin2024}
M. Karpukhin, R. Kusner, P. McGrath and D. Stern, 
\emph{Embedded minimal surfaces in $\mathbb{S}^3$ and $\mathbb{B}^3$ via equivariant eigenvalue optimization}. arXiv:2402.13121

\bibitem{karpukhin2022}
M. Karpukhin and A. M\'etras,
\emph{Laplace and Steklov Extremal Metrics via $n$-Harmonic Maps},
J. Geom. Anal. \textbf{32} (154) (2022). 

\bibitem{karpukhin2021}
M. Karpukhin, N. Nadirashvili, A. Penskoi and I. Polterovich, 
\emph{Conformally maximal metrics for Laplace eigenvalues on surfaces},
Surv. Differ. Geom. (2021), 205--256.

\bibitem{karpukhin2022_2}
M. Karpukhin and X. Zhu
\emph{Spherical conical metrics and harmonic maps to spheres},
Trans. Amer. Math. Soc. \textbf{375} (5) (2022), 3325--3350.

\bibitem{karpukhin}
M. Karpukhin,
\emph{On the Yang–Yau inequality for the first Laplace eigenvalue},
Geom. Funct. Anal. \textbf{29} (2019), 1864--1885.

\bibitem{kriegl}
A. Kriegl, P. Michor and A. Rainer,
\emph{Many parameter H\"older perturbation of unbounded operators},
Math. Ann \textbf{353} (2012), 519 -- 522.

\bibitem{korevaar}
N. Korevaar,
\emph{Upper bounds for eigenvalues of conformal metrics},
J. Differential Geom. \textbf{37}(1) (1993), 73--93. 

\bibitem{li}
P. Li and S.-T. Yau,
\emph{A new conformal invariant and its applications etc},
Invent. Math. \textbf{69} (1982), 269--291.

\bibitem{matthiesen}
H. Matthiesen, \emph{Extremal metrics for Laplace eigenvalues in perturbed conformal classes on products},
J. Geom. Anal. \textbf{29} (2019), 2456--2468.

\bibitem{mladenov}
I. Mladenov and J. Oprea,
\emph{Unduloids and their closed geodesics},
In: Proceedings of the Fourth International Conference on Geometry, Integrability and Quantization, Coral Press, Sofia,
2003, 206--234.

\bibitem{siffert}
H. Matthiesen and A. Siffert,
\emph{Handle attachment and the normalized first eigenvalue}. arXiv:1909.03105v2

\bibitem{nadirashvili}
N. Nadirashvili,
\emph{Berger's isoperimetric problem and minimal immersions of surfaces},
Geom. Funct. Anal. \textbf{6} (5) (1996), 877--897.

\bibitem{petrides}
R. Petrides,
\emph{Existence and regularity of maximal metrics for the first Laplace eigenvalue on surfaces},
Geom. Funct. Anal. \textbf{24} (2014), 1336--1376.

\bibitem{ruh}
E. Ruh and J. Vilms,
\emph{The tension field of the Gauss map}
Trans. Amer. Math. Soc. \textbf{149} (1970), 569--573.

\bibitem{takahashi}
T. Takahashi, 
\emph{Minimal immersions of riemannian manifolds},
J. Math. Soc. Jap. \textbf{18} (1966), 380--385.

\bibitem{takeuchi},
H. Takeuchi,
\emph{Some conformal properties of $p$-harmonic maps and a regularity for sphere-valued $p$-harmonic maps}
J. Math. Soc. Jpn. \textbf{46} (1994), 217--234.

\bibitem{yau1980}
P. C. Yang and  S.-T. Yau,
\emph{Eigenvalues of the Laplacian of compact Riemann surfaces and minimal submanifolds},
Ann. Scuola Norm. Sup. Pisa Cl. Sci. \textbf{7} (1) (1980), 55--63.
\end{thebibliography}
\end{document}